\newcommand\crossmark[1][]{%
	\tikz[scale=0.4,#1]{
		\fill(0,0)--(0.1,0) .. controls (0.5,0.4) .. (1,0.7)--(0.9,0.7) ..  controls (0.5,0.5) ..(0,0.1) --cycle;
		\fill(1,0.1)--(0.9,0.1) .. controls (0.5,0.3) .. (0,0.7)--(0.1,0.7) .. controls (0.5,0.4) ..(1,0.2) --cycle;
	}%
}
\newcolumntype{P}[1]{>{\centering\arraybackslash}p{#1}}
\newcolumntype{x}[1]{>{\centering\arraybackslash}p{#1}}
\newtheorem{theorem}{Theorem}
\newtheorem{remark}{Remark}
\tikzstyle{block}=[draw,rectangle,fill=blue!5,text width=12 em,text centered, minimum height=12mm, node distance=5 em]
\tikzstyle{line} = [draw,-latex']
\begin{document}
	
		\title{Re-cycling  of DNA-containing Capsids Enhances Hepatitis B Virus Infection}
		
		
	\author{ Rupchand Sutradhar and D C Dalal}
	\date{} 
	\maketitle
		
	\section*{Abstract}
			 Hepatitis B virus (HBV) infection is a deadly liver disease. A part of the newly produced HBV DNA-containing capsids are reused as a core particle in HBV replication. It is investigated that the recycling of HBV capsids greatly affects the intercellular dynamics of HBV infection. The main purpose of the present work is to study the recycling effects of HBV DNA-containing capsids in the HBV infection.
		  Incorporating the  recycling effects of  capsids, a four-compartment  mathematical model is proposed for the first time in order to  understand the dynamics of HBV infection in a better way.
			The well-posedness of the model is obtained by showing  non-negativity, boundedness, and uniqueness of the solution.
			The explicit formula for the basic reproduction number is determined by applying the  next-generation approach method.  
			The proposed model  is solved with help of fourth-order explicit Runge-Kutta method.
			Through a rigorous comparison with experimental data obtained from four chimpanzees, the proposed model demonstrates a strong correspondence with the dynamics of HBV infection.
			A comprehensive global sensitivity analysis is also conducted to identify the most positively as well as negatively sensitive parameters for each compartment in the model.
		In the context of biology, depending on the value of basic reproduction number, the present  model possesses two  global asymptotically stable steady states: disease-free and endemic. The present study shows that the consideration of recycling of capsids reverses the existing mechanism of infection dynamics. 
			This study also reveals that  the accumulation of  capsids within the infected hepatocytes is a key factor for exacerbating the disease.
			Moreover, another major finding of our study is that due to recycling of capsids, the number of released viruses  increases in spite of low virus production rate.
			 The recycling of HBV DNA-containing capsids acts as a positive feedback loop in the viral infection.\vspace{0.2cm}\\
			Keywords: Hepatitis B; Capsids; Recycling effects; Stability; Lyapunov functional; Numerical simulation.
	\section*{Background}
	Hepatitis B virus (HBV) causes a deadly liver disease.  As reported by the World Health Organization (WHO), around 296 million people worldwide suffer from the chronic HBV infection, where 1.5 million new cases were found each year. It was estimated that 820 000 people died due to hepatitis B virus infection in the year 2019 alone, most of them had either cirrhosis or hepatocellular carcinoma (HCC) \cite{WHO_2021}. An important factor contributing to this large number of infected humans is the disease transmission rate. Naturally, there are two types of HBV infection: acute and chronic. In the acute phase of infection, HBV DNA copies may reach as high as $10^{10}$ copies/ml. Acute infection typically lasts a few weeks and are eventually cured as a result of immune response \cite{2001_Whalley,2007_Ciupe_Role}. As for the adult population, there is a clearance rate of 85-95\% in acute infection. However, chronic infections can last for many years and may result in diseases, such as liver cirrhosis and HCC \cite{2002_Ribeiro}. Chronic infection generally is a life-long incurable condition which affects the personal impacts of the patient such as stigma and discrimination, anxiety about disease progression, 
	and long-term health care costs etc.  According to the literature and clinical findings, horizontal and vertical transmission of this virus are the two main modes of transmission to human populations. Blood transfusions, unprotected sex, reusing syringes and blades, and the reuse of medical equipment during surgery are  examples of horizontal transmission.
	A complete understanding of the mechanisms of HBV persistence remains elusive. Consequently, HBV is a significant health concern for the  population of the World.
	
	The replication process of HBV is quite complex. In the course of infection, virion particles enter into the hepatocytes by the NA(+)-taurocholate co-transporting polypeptide (NTCP) receptor via endocytosis and uncoat themselves. The relaxed circular DNA (rcDNA) is delivered into the nucleus and is converted to covalently closed circular DNA (cccDNA) by the host DNA repair mechanism \cite{2002_lewin_hepatitis,2007_Guo_characterization}. RNA polymerase IIs (RNA Poly IIs) use the cccDNA as a template to produce viral RNAs, such as pgRNA, L, S, PreC, and X mRNAs and, the
	viral proteins are synthesized by ribosomes through the translation of mRNAs. Polymerase and pgRNA form a 1:1 complex called a Ribonucleoprotein (RNP) complex \cite{2021_Fatehi}. The nucleocapsid is formed by encapsulating this RNP complex by C proteins and it is commonly known as the immature nucleocapsid. The pgRNA is reverse-transcribed by polymerase \cite{2005_Murray}, resulting in the immature nucleocapsid being converted into a rcDNA-containing nucleocapsid, known as the mature nucleocapsid \cite{2021_Saraceni_review}. This newly produced rcDNA-containing nucleocapsid will either  return to the nucleus or be released as complete HBV
	from the infected hepatocytes \cite{2021_prifti}. A portion of  rcDNA-containing capsids is transported again into the nucleus to increase the amount of supercoiled cccDNA. It is known as recycling of rcDNA-containing capsids \cite{2021_Fatehi} and is considered as a significant factor responsible for the intracellular dynamics of HBV replication.

	In Literature, there are several viral  dynamic models \cite{2023_nayeem,2010_ji,2019_hui,2012_wang_mathematical,2015_Tchinda,2015_Moualeu} proposed during the last two decades. Some of these are associated with HBV infection. These models are useful in understanding the pathogenesis of infection as well as devising better treatment protocols. Nowak et al. \cite{1996_Nowak} analyzed a basic HBV infection model comprised of three compartments: uninfected hepatocytes, infected hepatocytes, and virions. Wodraz et al. \cite{2003_Wodarz} improved the basic model by including the effects of cytotoxic T cells and antibodies. By putting a standard incidence function in the place of mass action term of the uninfected hepatocytes and viruses, Min et al. \cite{2008_Min} and Chen et al. \cite{2014_Chen} modified the classic viral infection model \cite{1996_Nowak}. According to them the mass action term  is not rational  for the HBV infection as it implies that someone with a smaller liver is unlikely to be infected.
	In addition to the standard incidence function, the time delay during the production process of the virus was also taken into account by Gourley et al. \cite{2008_Gourley}.
	Rather than using constant growth terms, Hews et al. \cite{2010_Hews} introduced a modified model that considers the logistic growth of uninfected hepatocytes.
	Eikenberry et al. \cite{2009_Eikenberry} presented a delay model on HBV infection when uninfected hepatocytes proliferated logistically.
	Haung et al. \cite{2009_Huang} proposed a new model using another incidence function, called Beddington-DeAngelis type incidence function. Yu Ji et al. \cite{2010_YuJi} first showed that the effects of immune response is not constant, and it follows a periodic function. The cure rate of the infected hepatocytes is an important factor in viral infection. Wang et al. \cite{2010_Wang} extended the model of Min et al. \cite{2008_Min} by incorporating the effects of cure rate of infected hepatocytes. Fatehi et al. \cite{2018_fatehi_nkcell} determined that in HBV infection, NK cell takes a significant role in apoptosis as it kills the infected cell by producing the perforin and granzymes.  Using a mathematical model that incorporates intracellular components of HBV infection, Murray et al. \cite{2005_Murray} were able to estimate the infection dynamics and clearance of viruses in three chimpanzees who were acutely infected. Murray et al. \cite{2006_Murray} proposed another mathematical model with three compartments and  measured the half-life of HBV as  approximately four hours.
	This model was modified by considering the uninfected hepatocytes  and also studied with delay differential equation by Manna and Chakrabraty \cite{2015_Manna}. 
	Considering the effects of antibodies and CTLs, Danane et al. \cite{2018_Danane_optimal} extended the model proposed by Manna and Chakrabraty \cite{2015_Manna}, including CTL immune response and examined the optimality of the model. Using an average incidence rate, Guo et al. \cite{2018_Guo} established the global stability of a delayed-diffusive HBV infection model.  Fatehi et al. \cite{2021_Fatehi} built up an Intracellular model of HBV infection and compared various kinds of therapeutic strategies. An important biological indicator of virus dynamics is the age of the infected cells. Recently, Liu et al. \cite{2021_liu_age} proposed an age-structured model of HBV that treated HBV capsids as a separate compartment. There are mainly two roots of infection spread: virus-to-cell and cell-to-cell transmission. 
	
	In some recent biological studies \cite{2021_prifti,2021_Saraceni_review,2021_diogo_review}, it is observed that the severity of HBV infection are greatly affected by the recycling of HBV DNA-containing capsids but how this recycling of HBV capsids contributes to the virus replication is poorly understood. One of the  reason could be the increase in number  of viruses in the liver, since a portion of newly produced capsids that are eventually reused as core particles providing an additional source for supercoiled cccDNA.  
	In order to control HBV transmission in host, having a clear knowledge on recycling of capsids is extremely important. Although some mathematical studies have been conducted on HBV transmission in a host, only a few of them considered the HBV DNA-containing capsids as a separate compartment. However, these models failed   to capture the actual dynamics of HBV infection. The main reason for this failure could be ignoring the recycling effects of capsids. 
	In this study, an improved mathematical model incorporating  the recycling effects of HBV capsids is proposed for the   first time .
	This model is expected to reveal the HBV intracellular dynamics more realistically compared to already available models. In a nutshell, we  have mainly concetrated on the following things:
	\begin{enumerate}[(i)]
		\item The effects of recycling of capsids in the HBV infection.
		\item The effects of volume fraction of capsids on the disease dynamics.
		\item Global sensitivity analysis of model parameters using the method Latin hypercube sampling (LHS)-Partial rank correlation coefficient (PRCC).
		
	\end{enumerate}
	\section{Mathematical model}\label{Mathematical model}
	The persistence of HBV infection for a long period in patients depends on the stability of cccDNA in the infected hepatocytes. cccDNA plays a central role in disease progression. There are two main sources of cccDNA: HBV DNA-containing capsids produced directly from the incoming  virus from extracellular space and  HBV DNA-containing capsids produced within the hepatocytes by recycling \cite{2021_Fatehi,2021_Saraceni_review,2021_diogo_review}.  The recycling of capsids is not a continuous process. Depending on the availability of viral surface proteins (L, M, S), a portion of newly produced HBV DNA-containing capsids  goes back to the nucleus to amplify the pool of cccDNA. So, the volume fraction of capsids in favor of virus production is generally a function of the surface protein (L, M, S). Assume that $\alpha \in (0,1)$ be the volume fraction of capsids in favour of virus production, then $\alpha$ is a function of  surface proteins. However, a fixed estimated value of $\alpha$ is used throughout the study for simulation purposes. 
	Accordingly, the proposed model is described  by  the system of ordinary differential equations following:	
	\begin{align} \label{eq1}
		&\text{Susceptible hepatocytes}:~~~~~~~\dfrac{dX}{dt}=\lambda-\mu X-kVX,\nonumber\\
		&\text{Infected hepatocytes:}~~~~~~~~~~~~~\dfrac{dY}{dt}=kVX-\delta Y,\nonumber\\
		&\text{HBV DNA-containing capsids:}~\dfrac{dD}{dt}=aY+\gamma(1-\alpha)D-\alpha\beta D-\delta D,\\
		&\text{Viruses:}~~~~~~~~~~~~~~~~~~~~~~~~~~~~~~~\dfrac{dV}{dt}=\alpha \beta D-cV.\nonumber 
	\end{align} 
	Here, $X(t), Y(t), D(t)$ and $V(t)$ denote the numbers of susceptible hepatocytes, infected hepatocytes, HBV DNA-containing capsids, and free viruses respectively. All the parameters $\mu, k, \delta, a, \alpha, \beta, \gamma$ and $c$ are non-negative. In this model, $\lambda$ is assumed to be constant growth rate of susceptible hepatocytes, and $\mu$ is their natural death rate. The usual death rate of infected hepatocytes, as well as capsids is $\delta$. The parameter $k$ describes  rate at which the  susceptible hepatocytes are infected by the viruses. HBV capsids are produced at the rate $a$ from  infected hepatocytes , and $\beta$ denotes the production rate of new viruses. Here, $c$ is the virus clearance rate, and capsids reproduce themselves by recycling at rate $\gamma$. In Figure \ref{The_diagrammatic_representation}, the diagrammatic representation of the system \eqref{eq1} is shown.  
	\vspace{0.3cm}
		\begin{center}
		\begin{figure}[h!]
			\centering
			\includegraphics[width=15cm,height=10cm]{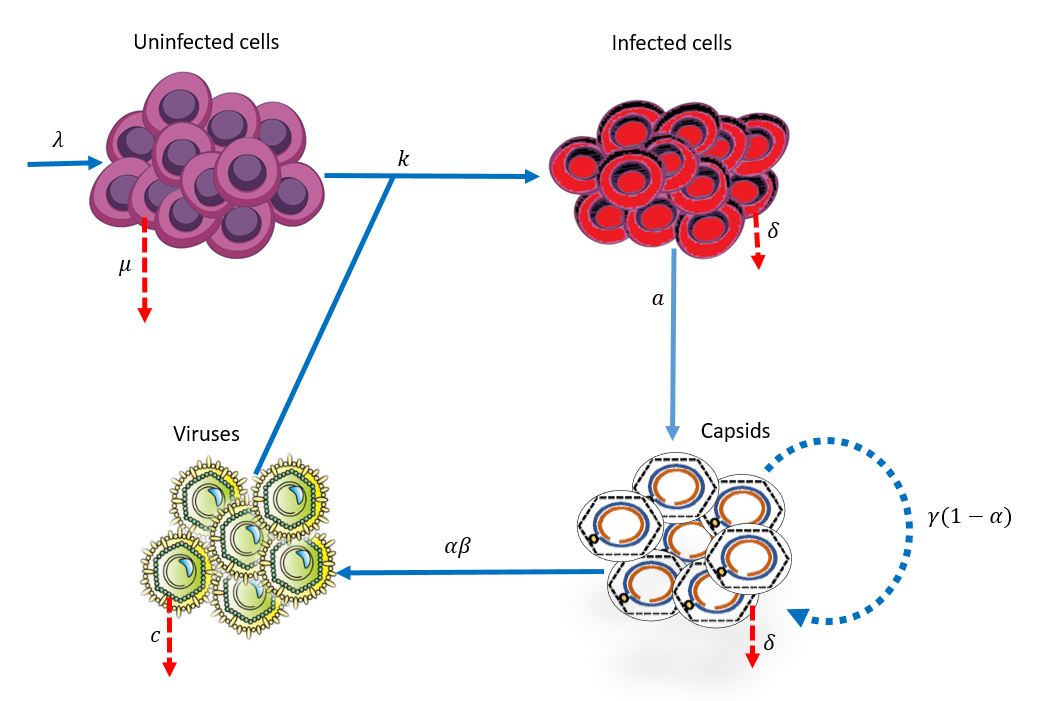}
			\caption{The diagrammatic representation of the system \eqref{eq1}.}
			\label{The_diagrammatic_representation}
		\end{figure}
	\end{center}

	\begin{table}[h]
		\footnotesize
		\begin{center}
			\begin{tabular}{|c|c|c|c|c|c|c|}
				\hline
				Uninfected  & Infected 		& HBV				 & Hepatitis B	 & Capsid-to-capsid  & Volume fraction  &References \\
				Hepatocytes & Hepatocytes 	& DNA-containing 	 &  Virus 			 & Production &   HBV DNA-containing     &  \\
				&            	& Capsids 	 		 &   		 & Rate &of capsids             &  \\ 
				\hline
				\textcolor{blue}{\checkmark} & \textcolor{blue}{\checkmark} & \textcolor{red}{\crossmark} & \textcolor{blue}{\checkmark} & \textcolor{red}{\crossmark} &\textcolor{red}{\crossmark}& \cite{1996_Nowak}\\ 
				\hline
				\textcolor{red}{\crossmark} & \textcolor{blue}{\checkmark} & \textcolor{blue}{\checkmark} & \textcolor{blue}{\checkmark} & \textcolor{red}{\crossmark} &\textcolor{red}{\crossmark}& \cite{2006_Murray}\\ 
				\hline
				\textcolor{blue}{\checkmark} & \textcolor{blue}{\checkmark} & \textcolor{blue}{\checkmark} & \textcolor{blue}{\checkmark} & \textcolor{red}{\crossmark} &\textcolor{red}{\crossmark}& \cite{2015_Manna,2018_Danane,2021_liu_age}\\ 
				\hline
				\textcolor{blue}{\checkmark} & \textcolor{blue}{\checkmark} & \textcolor{blue}{\checkmark} & \textcolor{blue}{\checkmark} & \textcolor{blue}{\checkmark}&\textcolor{blue}{\checkmark} & Present\\ 
				& & & &  && work\\
				\hline
			\end{tabular}
			\vspace{0.1cm}
			\caption{\label{Table-1} 
				Comparison between present work and some previously existing studies based on various factors.}
		\end{center}
	\end{table}
	\noindent
	This paper is organized as follows. In Section \ref{Solution property}, we prove  that our proposed model is well-posed by showing the existence, uniqueness, non-negativity, and boundedness of the solutions of the system \eqref{eq1}. In Section \ref{Stability of the solutions}, we calculate the basic reproduction number and determine the existence of all steady states. We also establish the local stability of both steady states using Routh Hurwitz criteria and global stability by constructing the appropriate Lyapunov functional and Lyapunov-LaSalle invariance principle in this section. In Section \ref{Numerical simulation}, we discuss some numerical results which support our analytical theory. In Section \ref{Comparison of the model}, the proposed model is compared to the existing one, and  the effects of some model parameters is analyzed by sensitivity analysis. We study the global sensitivity analysis of model parameters in section \ref{GSA}.  Finally, a brief conclusion is provided in Section \ref{Conclusions}.
	
	\section{Properties of the solutions } \label{Solution property}
	Here, the existence and uniqueness of the solution for the system \eqref{eq1} are discussed. In order to ensure the feasibility of the model from biological point of view, it is crucial to show that all solutions remain non-negative and bounded across all non-negative initial conditions. The rationale behind this is that the number of cells or viruses must not drop below zero or exhibit limitless growth after the time of infection. Accordingly, the subsequent discussion confirms the non-negativity and boundedness of the solution.
	
	\subsection{Existence and uniqueness of the solutions}
	Each function of the right hand side of system \eqref{eq1}  is polynomial functions of four variables $X(t),~Y(t),~D(t),~V(t)$. So, each function is   continuous and satisfies Lipschitz's condition on any closed interval $[0,\eta]$, $\eta\in \mathbb{R^+}$, set of all positive real numbers. Therefore, the solution of the system \eqref{eq1} exists and is unique. 
	\subsection{Non-negativity of the solutions}
	\begin{theorem}
		Under any non-negative initial conditions $(X(0)\geq 0,~Y(0)\geq 0,~D(0)\geq 0,~V(0)\geq 0)$, all solutions of the system \eqref{eq1} will remain non-negative.
	\end{theorem}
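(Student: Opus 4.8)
The plan is to show that the closed non-negative orthant $\mathbb{R}_{\geq 0}^4$ is positively invariant under the flow of \eqref{eq1}. Since existence and uniqueness on every interval $[0,\eta]$ have already been established, it suffices to argue that a solution started in the closed orthant cannot leave it. I would do this by a first-exit (continuity) argument combined with an inspection of each vector-field component on the corresponding boundary face.

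First I would rewrite each equation so as to expose its linear dependence on its own variable: $\dot X = \lambda - (\mu+kV)X$, $\dot Y = kVX - \delta Y$, $\dot D = aY + \bigl(\gamma(1-\alpha)-\alpha\beta-\delta\bigr)D$, and $\dot V = \alpha\beta D - cV$. The key observation is that whenever one coordinate vanishes while the remaining three stay non-negative, the corresponding derivative is itself non-negative: on $\{X=0\}$ we get $\dot X = \lambda \geq 0$; on $\{Y=0\}$ we get $\dot Y = kVX \geq 0$; on $\{D=0\}$ we get $\dot D = aY \geq 0$; and on $\{V=0\}$ we get $\dot V = \alpha\beta D \geq 0$. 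This is exactly the sub-tangentiality condition guaranteeing that the field points into (or along) the boundary of the orthant.

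To turn this into a proof I would set $t_1 = \sup\{\, t \ge 0 : X(s),Y(s),D(s),V(s) \ge 0 \text{ for all } s\in[0,t]\,\}$ and suppose, for contradiction, that $t_1$ is finite and strictly smaller than the right endpoint of the existence interval. By continuity all four coordinates are non-negative at $t_1$ and at least one of them equals zero there; the boundary computation above shows that each vanishing coordinate has a non-negative derivative at $t_1$, so no coordinate can dip below zero immediately after $t_1$, contradicting the choice of $t_1$ as the first exit time. Hence the solution remains in $\mathbb{R}_{\geq 0}^4$ throughout its interval of existence.

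An equivalent route, which I would mention as an alternative, is to integrate each equation (linear in its own variable) by an integrating factor, obtaining for instance
\[
X(t)=X(0)\exp\!\Bigl(-\!\int_0^t(\mu+kV(s))\,ds\Bigr)+\int_0^t \lambda \exp\!\Bigl(-\!\int_s^t(\mu+kV(\tau))\,d\tau\Bigr)ds,
\]
together with analogous representations of $Y$, $D$, and $V$ in terms of the source terms $kVX$, $aY$, and $\alpha\beta D$. Since every exponential factor is strictly positive and every source term is a sum or product of quantities that are non-negative by the preceding steps, each representation is manifestly non-negative. The main obstacle in either approach is the circular coupling of the four equations — non-negativity of $X$ feeds that of $Y$, which feeds $D$, which feeds $V$, which feeds back into $X$ — so one cannot legitimately treat the equations one at a time; the first-exit argument (equivalently, the observation that the non-negative cone is preserved by the non-negativity-preserving Picard iteration) is precisely what closes this loop.
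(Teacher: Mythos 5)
Your proposal is correct, and its core computation coincides with the paper's: both evaluate each component of the vector field on the face where its own variable vanishes ($\dot X|_{X=0}=\lambda$, $\dot Y|_{Y=0}=kVX$, $\dot D|_{D=0}=aY$, $\dot V|_{V=0}=\alpha\beta D$) and then run a first-crossing contradiction. Where you differ is in how the circular coupling among $Y$, $D$, $V$ is broken. The paper supposes $V$ is the variable that fails, takes the first time $t_V$ with $V(t_V)=0$ and $\dot V(t_V)\le 0$, and chains backward through the compartments: $D(t_V)\le 0$ forces an earlier time $t_D$, which forces a still earlier time $t_Y$, and there $kV(t_Y)X(t_Y)\le 0$ contradicts the positivity of $V$ prior to $t_V$. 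You instead take a single first-exit time $t_1$ for the whole orthant and invoke sub-tangentiality, which treats the four coordinates symmetrically and avoids choosing which variable fails first; you also offer an integrating-factor representation the paper lacks, which in particular gives non-negativity of $X$ unconditionally (the exponential factor is positive whatever sign $V$ has). Both arguments share the same soft spot: at the critical time a vanishing coordinate may have \emph{zero} derivative (e.g., if $Y(t_1)=V(t_1)=0$ then $\dot Y(t_1)=0$), and ``non-negative derivative on the boundary'' alone does not preclude an immediate dip below zero; the rigorous patch is precisely the Nagumo/quasipositivity theorem you name, or an $\epsilon$-perturbation argument. Since the paper additionally asserts the strict ordering $t_V>t_D>t_Y$ and the strict inequality $V(t_Y)>0$ without justification, your version is, if anything, on firmer footing.
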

	\noindent
	The proof of this theorem is presented in Appendix A.
	\subsection{Boundedness of the solutions}
	\begin{theorem}
		For any non-negative initial condition $(X(0) \geq 0,~Y(0)\geq 0,~D(0)\geq 0,~V(0)\geq 0)$, the solutions of the system \eqref{eq1} are bounded for all $t>0$,  provided $R_s:=\alpha\beta-(1-\alpha)\gamma+\delta>0$ .
	\end{theorem}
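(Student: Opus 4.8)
The plan is to bound the four compartments one at a time, exploiting the nearly cascade structure of the system, and to invoke at each stage the elementary scalar comparison fact that a solution of $\dot u \le b - r u$ with $r>0$ satisfies $u(t)\le\max\{u(0),\,b/r\}$ for all $t>0$. Throughout I would use the non-negativity of $X,Y,D,V$ established in the previous theorem, which lets me discard the sign-definite coupling terms. First I would bound $X$: since $kVX\ge 0$, the first equation gives $\dot X \le \lambda-\mu X$, hence $X(t)\le \max\{X(0),\lambda/\mu\}=:M_X$.

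Next I would treat the hepatocyte total $T:=X+Y$. Adding the first two equations, the infection term $kVX$ cancels and leaves $\dot T = \lambda-\mu X-\delta Y \le \lambda - m\,T$, where $m:=\min\{\mu,\delta\}>0$. Comparison then gives $T(t)\le \max\{T(0),\lambda/m\}$, and in particular $Y(t)\le M_Y$ for a suitable constant $M_Y$.

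The decisive step is the capsid compartment, and this is where I expect the main obstacle to lie. Rewriting the third equation with the grouping used in the statement, one has $\gamma(1-\alpha)-\alpha\beta-\delta=-R_s$, so that $\dot D = aY - R_s D$. The recycling term $\gamma(1-\alpha)D$ makes this equation self-amplifying, so boundedness is not automatic: it is precisely the hypothesis $R_s=\alpha\beta-(1-\alpha)\gamma+\delta>0$ that guarantees the linear part is dissipative. Since $R_s>0$ marks the threshold between decaying and exponentially growing capsid dynamics, the entire result hinges on it; the other three steps are robust cascade estimates that need no such assumption. Using $Y\le M_Y$ I would conclude $\dot D\le aM_Y-R_sD$, hence $D(t)\le \max\{D(0),\,aM_Y/R_s\}=:M_D$.

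Finally, the virus equation yields $\dot V=\alpha\beta D-cV\le \alpha\beta M_D-cV$, so $V(t)\le \max\{V(0),\,\alpha\beta M_D/c\}$. Collecting the four bounds shows that every trajectory starting from non-negative data remains bounded for all $t>0$, which completes the proof. I would present the argument in exactly this order ($X$, then $Y$ via $T$, then $D$ using $R_s>0$, then $V$), since each bound feeds the next, and I would explicitly flag the sign of $R_s$ as the single place where the stated hypothesis is used.
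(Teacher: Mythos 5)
Your proof is correct and follows essentially the same route as the paper: a cascade of comparison estimates built on $T=X+Y$ with $\min\{\mu,\delta\}$, then the capsid equation where the hypothesis $R_s>0$ supplies the dissipative linear part, and finally the virus equation. If anything, your pointwise comparison bound $u(t)\le\max\{u(0),\,b/r\}$ is slightly tighter in logic than the paper's $\limsup$-based estimates, since it yields boundedness for all $t>0$ directly (the paper feeds an asymptotic bound on $Y$ into the $D$ equation as if it held for all time), but the underlying argument is the same.
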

	\begin{proof}
		In this section, it is shown that the solutions for any non-negative initial condition should be bounded. For this purpose,
		a new variable $T(t)$ is introduced as,		
		\begin{align*}
			T(t)=X(t)+Y(t).
		\end{align*}		
		\noindent
		Then, $\dfrac{dT}{dt}=\lambda-\mu X-\delta Y.$ Define $\rho=\min\{\mu, \delta\}$. Then we have,
		
		\begin{align*}
			\dfrac{dT}{dt}\leq \lambda-\rho(X+Y) \implies \dfrac{dT}{dt}\leq \lambda-\rho T,
		\end{align*}		
		\noindent
		which implies that
		$\lim \sup_{t\to\infty} T(t) \leq \dfrac{\lambda}{\rho}$. This shows that 	$\lim \sup_{t\to\infty} X(t) \leq \dfrac{\lambda}{\rho}$ and 	$\lim\sup_{t\to\infty}  Y(t) \leq \dfrac{\lambda}{\rho}$. So, $X(t)$ and $Y(t)$ are bounded for all $t>0$. Now, from the third equation of the system \eqref{eq1},  one can get 
		
		\begin{align*}
			\dfrac{dD}{dt}\leq \dfrac{a\lambda}{\rho}+(\gamma(1-\alpha)-\alpha\beta-\delta)D
			\implies\lim \sup_{t\to\infty}  D(t) \leq \dfrac{a \lambda}{\rho (\alpha\beta+\delta-\gamma(1-\alpha))},\\ ~\mbox{provided} ~ \alpha\beta-(1-\alpha)\gamma+\delta>0.\\
		\end{align*}		
		\noindent
		Therefore, $D(t)$ is bounded for all $t>0$. Using the boundedness of $D(t)$, from the last equation of the system \eqref{eq1} one can find	
		\begin{align*}
			&\dfrac{dV}{dt}\leq \dfrac{a \lambda\alpha\beta }{\rho (\alpha\beta+\delta-\gamma(1-\alpha))}-cV,  \\ &\implies \lim\sup_{t\to\infty}  V(t) \leq \dfrac{a \lambda \alpha\beta }{\rho c (\alpha\beta+\delta-\gamma(1-\alpha))}.
		\end{align*}
		
		\noindent	
		Hence, $V(t)$ is bounded for all $t>0$. Therefore, all the population ($X(t), Y(t), D(t) ~\& ~V(t)$) are bounded. Consequently, one can also observe the closed, bounded, and positively invariant set as follows:	
		\begin{align*}
			\mathcal{D}=&\Bigg\{\bigg(X(t), Y(t), D(t), V(t)\bigg)\in \mathbb{R}^{4}_{+}:~ 0\leq X(t)+Y(t)\leq \frac{\lambda}{\rho},\\
			&0\leq D(t)\leq \dfrac{a \lambda}{\rho (\alpha\beta+\delta-\gamma(1-\alpha))}, ~0\leq V(t)\leq \dfrac{a \lambda \alpha\beta }{\rho c (\alpha\beta+\delta-\gamma(1-\alpha))}\Bigg\}
		\end{align*}		   
	\end{proof}
	\begin{remark}
		The condition $R_s>0\implies \alpha\beta-(1-\alpha)\gamma+\delta>0\implies \gamma<\frac{\alpha\beta+\delta}{1-\alpha}.$ If $\gamma$ meets this condition, then $D(t)$ becomes always bounded. Otherwise, $D(t)$ diverges to positive infinite \textit{i.e.} the severity of infection increases significantly and situation of the patient becomes worse and worse with time. This relationship between these four parameters is very important when treating HBV patients.
	\end{remark}
    \section*{Results}
	\section{Existence and stability of  equilibria} \label{Stability of the solutions}	
	Before proceeding to the detailed study, it is mentioned that the condition $R_s>0$ will be used throughout the further  study. 		 
	\subsection{Existence of equilibria}
	In order to evaluate the equilibrium points of the system \eqref{eq1}, one need to consider the zero growth isoclines and their points of intersection. Thus, the equilibrium points of the system \eqref{eq1} are found by solving the system of equations	
	\begin{align} \label{eq1_equilibrium}
		&\lambda-\mu X-kVX=0,\nonumber\\
		&kVX-\delta Y=0,\nonumber\\
		&aY+\gamma(1-\alpha)D-\alpha\beta D-\delta D=0,\\
		&\alpha \beta D-cV=0.\nonumber 
	\end{align} 
	It can be shown that the system \eqref{eq1} possesses two sets of equilibrium points. 
	\begin{enumerate}[(i)]
		\item \textbf{The uninfected steady-state or disease-free equilibrium ($E_u$):} The uninfected steady-state or disease-free equilibrium always exists, and it is denoted by $E_u=\left(\dfrac{\lambda}{\mu},0,0,0\right)$.
		\item \textbf{The infected steady-state or endemic equilibrium ($E_i$):} Mathematically, infected steady-state or endemic equilibrium points exists always. But biologically,  the existence of this steady state depends on the basic reproduction number ($R_0$) and $R_s$ (which is defined earlier). When $R_0>1$, and $R_s>0$, the endemic equilibrium ($E_i$) occurs  and is given by $E_i=\left(X_1,Y_1,D_1,V_1\right)$, where
		\begin{align*}
			X_1&=\frac{c \delta  (\alpha  (\beta +\gamma )-\gamma +\delta )}{a \alpha  \beta  k}, 
			~~~~~~~~~~~~~~~Y_1=\frac{a \alpha  \beta  k \lambda -c \delta  \mu  (\alpha  (\beta +\gamma )-\gamma +\delta )}{a \alpha  \beta  \delta  k},\\
			D_1&=\frac{a \alpha  \beta  k \lambda -c \delta  \mu  (\alpha  (\beta +\gamma )-\gamma +\delta )}{\alpha  \beta  \delta  k (\alpha  (\beta +\gamma )-\gamma +\delta )},
			~V_1=\frac{a \alpha  \beta  k \lambda -c \delta  \mu  (\alpha  (\beta +\gamma )-\gamma +\delta )}{c \delta  k (\alpha  (\beta +\gamma )-\gamma +\delta )}.
		\end{align*}
	\end{enumerate}
	\subsection{Basic reproduction number ($R_0$)}
	In case of viral infection, the basic reproduction number is the number of secondary infective cells produced by a single infective cell, which is introduced into a fully susceptible population \cite{1990_Diekmann}.
	The first equation of the system \eqref{eq1} is for the uninfected class and the last three equations are meant for the infected class. The next-generation approach \cite{2002_Driessche_next_g_m,2005_Heffernan_next_g_m} is used to determine the basic reproduction number. Rewrite the last three equations of system \eqref{eq1} as, 	
	\begin{align} \label{system_R0}
		&\dfrac{dY}{dt}=\mathcal{F}_1-\mathcal{V}_1~ \mbox{where}~ \mathcal{F}_1=kVX~ \mbox{and}~ \mathcal{V}_1=\delta Y, \nonumber\\
		&\dfrac{dD}{dt}=\mathcal{F}_2-\mathcal{V}_2~ \mbox{where}~ \mathcal{F}_2=0 ~\mbox{and}~ \mathcal{V}_2=-aY-\gamma(1-\alpha)D+\alpha\beta D+\delta D,\\
		&\dfrac{dV}{dt}=\mathcal{F}_3-\mathcal{V}_3~ \mbox{where}~ \mathcal{F}_3=0 ~\mbox{and}~ \mathcal{V}_3=-\alpha\beta D+cV.\nonumber
	\end{align}	
	\noindent
	We define
	\begin{align*} \mathcal{F}=
		\begin{bmatrix}
			~\dfrac{\partial \mathcal{F}_1}{\partial Y}&~ \dfrac{\partial \mathcal{F}_1}{\partial D} & ~\dfrac{\partial \mathcal{F}_1}{\partial V}~ \\[12 pt]
			\dfrac{\partial \mathcal{F}_2}{\partial Y} & \dfrac{\partial \mathcal{F}_2}{\partial D} & \dfrac{\partial \mathcal{F}_2}{\partial V} \\[12 pt]
			\dfrac{\partial \mathcal{F}_3}{\partial Y} & \dfrac{\partial \mathcal{F}_3}{\partial D} & \dfrac{\partial \mathcal{F}_3}{\partial V} \\
		\end{bmatrix}_{E_u} ~~\mbox{and}~~~
		\mathcal{V}=
		\begin{bmatrix} 
			~\dfrac{\partial \mathcal{V}_1}{\partial Y}& ~\dfrac{\partial \mathcal{V}_1}{\partial D} & ~\dfrac{\partial \mathcal{V}_1}{\partial V}~ \\[12 pt]
			\dfrac{\partial \mathcal{V}_2}{\partial Y} & \dfrac{\partial \mathcal{V}_2}{\partial D} & \dfrac{\partial \mathcal{V}_2}{\partial V} \\[12 pt]
			\dfrac{\partial \mathcal{V}_3}{\partial Y} & \dfrac{\partial \mathcal{V}_3}{\partial D} & \dfrac{\partial \mathcal{V}_3}{\partial V} \\
		\end{bmatrix}_{E_u},
	\end{align*}
	\noindent
	where $E_u$ denotes the disease-free equilibrium point which is calculated above. After simplification, we get	
	\begin{align*} \mathcal{F}=
		\begin{bmatrix}
			~0~ & 0~ & \dfrac{k\lambda}{\mu}~ \\
			0 & 0 & 0 \\
			0 & 0 & 0 \\
		\end{bmatrix}~~
		\mbox{and}~~ \mathcal{V}=
		\begin{bmatrix}
			\delta & 0 & 0 \\
			-a & -\gamma(1-\alpha)+\alpha\beta +\delta  & 0 \\
			0 & -\alpha\beta & c 
		\end{bmatrix}.
	\end{align*}
	The basic reproduction number of the system \eqref{eq1} is defined by $R_0=\rho\left(\mathcal{F} \mathcal{V}^{-1}\right)$, where $\rho\left(\mathcal{F} \mathcal{V}^{-1}\right)$ is defined as the spectral radius of the matrix $\mathcal{F} \mathcal{V}^{-1}$, which is given by 
	$R_0=\dfrac{ak\lambda\alpha\beta}{\left(c\alpha\beta\delta-c\gamma\delta+c\alpha\gamma\delta+c\delta^2\right)\mu}$.
	\newline

	\subsection{Stability analysis of equilibria}
	
	\subsubsection{Local stability analysis}
	%
	\begin{theorem}
		The disease-free equilibrium $E_u$ is locally asymptotically stable when  $R_0<1$ and $R_s>0$.
	\end{theorem}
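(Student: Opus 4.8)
The plan is to linearise the system \eqref{eq1} about $E_u$ and apply the Routh--Hurwitz criterion to the characteristic polynomial of the Jacobian. First I would write the Jacobian $J$ of the right-hand side of \eqref{eq1} in the variables $(X,Y,D,V)$ and evaluate it at $E_u=(\lambda/\mu,0,0,0)$. Since $V=0$ there, the $X$-equation decouples from the infected block: the first column of $J(E_u)$ has the single nonzero entry $-\mu$ on the diagonal, so $\xi_1=-\mu<0$ is an eigenvalue outright, and the rest of the spectrum is controlled by the $3\times 3$ block $J_1$ governing $(Y,D,V)$, namely
\[
J_1=\begin{bmatrix} -\delta & 0 & k\lambda/\mu \\ a & -R_s & 0 \\ 0 & \alpha\beta & -c \end{bmatrix},
\]
where I have used the defining identity $\gamma(1-\alpha)-\alpha\beta-\delta=-R_s$ to rewrite the $(2,2)$ entry in terms of $R_s$.

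Next I would expand $\det(\xi I-J_1)=\xi^3+a_1\xi^2+a_2\xi+a_3$, which yields
\[
a_1=\delta+R_s+c,\qquad a_2=\delta R_s+c\delta+cR_s,\qquad a_3=c\delta R_s-\frac{ak\lambda\alpha\beta}{\mu}.
\]
The key algebraic observation is that the denominator appearing in $R_0$ factors as $c\big(\alpha\beta\delta-\gamma\delta+\alpha\gamma\delta+\delta^2\big)=c\delta\big(\alpha\beta-\gamma(1-\alpha)+\delta\big)=c\delta R_s$. Consequently $\tfrac{ak\lambda\alpha\beta}{\mu}=R_0\,c\delta R_s$, so that $a_3=c\delta R_s(1-R_0)$. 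This identity is what ties the sign of the constant term directly to the threshold $R_0<1$, and I expect it to be the main obstacle: once the $R_0$-denominator is recognised as $c\delta R_s$, everything else is bookkeeping.

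Finally I would check the three Routh--Hurwitz conditions for a cubic, $a_1>0$, $a_3>0$, and $a_1a_2>a_3$. Under the standing hypotheses ($R_s>0$ and all parameters positive), $a_1>0$ is immediate; $a_3=c\delta R_s(1-R_0)>0$ precisely when $R_0<1$; and since $a_1\ge c$ and $a_2\ge \delta R_s$ give $a_1a_2\ge c\delta R_s> c\delta R_s(1-R_0)=a_3$, the product condition holds automatically (this also confirms $a_2>0$). Hence all three roots of $J_1$ have negative real part, and together with $\xi_1=-\mu<0$ every eigenvalue of $J(E_u)$ lies in the open left half-plane, so $E_u$ is locally asymptotically stable whenever $R_0<1$ and $R_s>0$.
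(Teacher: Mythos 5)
Your proof is correct and follows essentially the same route as the paper: extract the eigenvalue $-\mu$, reduce to the $3\times 3$ block in $(Y,D,V)$, and apply the Routh--Hurwitz criterion, with coefficients $a_1,a_2,a_3$ matching the paper's $A_1,A_2,A_3$ exactly. Your only (welcome) refinements are making explicit the identity $a_3=c\delta R_s(1-R_0)$, which the paper uses implicitly, and verifying $a_1a_2>a_3$ by the crude bound $a_1a_2\geq c\delta R_s>a_3$ rather than the paper's full expansion.
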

	\begin{proof}
		The Jacobian matrix of the system \eqref{eq1} at the disease-free steady state $(E_u)$ is given by	
		\begin{align}
			\begin{bmatrix} \label{Jacobian_disease free}
				-\mu & 0 & 0	& -\dfrac{k\lambda}{\mu}\\
				0 & -\delta & 0 & \dfrac{k\lambda}{\mu}\\
				0 & a & \gamma ( 1-\alpha )-\alpha\beta-\delta & 0\\
				0 & 0 & \alpha\beta & -c
			\end{bmatrix}.
		\end{align}	
		\noindent
		It is clear that $-\mu$ is a negative eigenvalue of the Jacobian matrix \eqref{Jacobian_disease free}. The other three eigenvalues of the Jacobian matrix \eqref{Jacobian_disease free} are the eigenvalues of the matrix	
		\begin{align}
			\begin{bmatrix}\label{Jacobian_disease free 3 cross 3}
				-\delta & 0 & \dfrac{k\lambda}{\mu}\\
				a & \gamma ( 1-\alpha )-\alpha\beta-\delta & 0\\
				0 & \alpha\beta & -c
			\end{bmatrix}.
		\end{align}		
		\noindent
		By using the Routh-Hurwitz criteria \cite{2015_martcheva_introduction}, it is  shown that all eigenvalues of the matrix \eqref{Jacobian_disease free 3 cross 3} are either negative or have  negative real parts. Let the characteristic equation of this matrix \eqref{Jacobian_disease free 3 cross 3} be	
		\begin{align*}		
			x^3+A_1 x^2+A_2 x+A_3=0.
		\end{align*}		
		\noindent
		When  $R_0<1$ and $R_s>0$,		
		\begin{align*}
			A_1&=R_s+c+\delta>0,\\
			A_2&=(\delta+c)R_s+c\delta>0,\\
			A_3&=-\dfrac{a \alpha  \beta  k \lambda }{\mu }+\alpha  \beta  c \delta -(1-\alpha ) c \gamma  \delta +c \delta ^2,\\
			& =-\dfrac{a \alpha  \beta  k \lambda }{\mu }+c\delta R_s>0,\\
			A_1A_2-A_3&=(R_s+c+\delta)((\delta+c)R_s+c\delta)+\dfrac{a \alpha  \beta  k \lambda }{\mu }-c\delta R_s,\\
			&=R_s(R_s+c+\delta)(c+\delta)+c\delta(c+\delta)+\dfrac{a \alpha  \beta  k \lambda }{\mu }>0.\\
		\end{align*}		
		\noindent	
		Thus, the Routh-Hurwitz criteria is satisfied. Therefore, local asymptotically stability of the disease-free equilibrium is established when $R_0<1.$ In case of  $R_0=1$, it is seen that the determinant of  Jacobian matrix \eqref{Jacobian_disease free} become zero and consequently it has at least one zero eigenvalue, and also when $R_0>1$, the matrix \eqref{Jacobian_disease free} has at least one positive eigenvalue and thus the disease-free equilibrium point will be unstable.
	\end{proof} 
	\begin{theorem}
		The endemic equilibrium $E_i$ is locally asymptotically stable when the basic reproduction number $R_0>1$ and does not exist if $R_0<1$.
	\end{theorem}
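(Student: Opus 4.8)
The plan is to prove local asymptotic stability of the endemic equilibrium $E_i$ by computing the Jacobian matrix of system \eqref{eq1} evaluated at $E_i=(X_1,Y_1,D_1,V_1)$ and applying the Routh--Hurwitz criteria to its characteristic polynomial, exactly as was done for $E_u$. The non-existence claim for $R_0<1$ follows immediately from the existence analysis already established: the components $Y_1,D_1,V_1$ each have numerator $a\alpha\beta k\lambda - c\delta\mu(\alpha(\beta+\gamma)-\gamma+\delta)$, and this is positive precisely when $R_0>1$ (one checks that $R_0>1 \iff a k\lambda\alpha\beta > c\delta\mu\,R_s$, since $\alpha(\beta+\gamma)-\gamma+\delta = R_s$). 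Hence for $R_0<1$ the equilibrium leaves the biologically feasible region $\mathcal{D}$, so no endemic equilibrium exists.

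First I would write the Jacobian at a general point,
\begin{align*}
J=\begin{bmatrix}
-\mu-kV & 0 & 0 & -kX\\
kV & -\delta & 0 & kX\\
0 & a & \gamma(1-\alpha)-\alpha\beta-\delta & 0\\
0 & 0 & \alpha\beta & -c
\end{bmatrix},
\end{align*}
and substitute $(X_1,Y_1,D_1,V_1)$. To keep the algebra manageable I would abbreviate $m:=\mu+kV_1$ and recall the equilibrium relations $kV_1X_1=\delta Y_1$, $aY_1=(\alpha\beta+\delta-\gamma(1-\alpha))D_1 = R_s D_1$, and $\alpha\beta D_1=cV_1$; these let me replace messy rational expressions by compact combinations of the rate constants. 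Expanding $\det(J-xI)=0$ gives a quartic $x^4+B_1x^3+B_2x^2+B_3x+B_4=0$, and the goal is to show $B_1,B_2,B_3,B_4>0$ together with the Hurwitz determinant conditions $B_1B_2-B_3>0$ and $(B_1B_2-B_3)B_3-B_1^2B_4>0$.

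The key structural observation I would exploit is that $-\mu-kV_1$ appears only in the $(1,1)$ entry and couples to the rest of the system solely through the $(2,1)$ and $(1,4)$ entries; expanding the determinant along the first column lets me factor the problem into the interplay between the decay term $m$ and the $3\times 3$ infected-class block. I expect each $B_i$ to organize naturally into a sum of manifestly positive products of $m$, $\delta$, $c$, and $R_s$ once the equilibrium substitutions are applied, mirroring how $A_3$ collapsed to $c\delta R_s$ in the disease-free case.

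The hard part will be verifying the final Hurwitz determinant inequality $(B_1B_2-B_3)B_3 - B_1^2 B_4>0$, since it is degree-four in the coefficients and the naive expansion produces many cancelling terms. My strategy there is to avoid brute force: I would substitute $R_0>1$ in the form $ak\lambda\alpha\beta/\mu > c\delta R_s$ to control the one potentially-negative contribution (the term carrying $B_4$, which encodes the viral production feedback $\alpha\beta D_1 \to cV_1$), and then group the remaining terms so that positivity of $m,\delta,c,R_s$ makes each group nonnegative. If a fully transparent grouping proves elusive, the fallback is to write $B_4 = c\delta R_s\, m\,(R_0-1)/R_0 \cdot(\text{positive factor})$ and show the Hurwitz expression is a positive multiple of a polynomial in $(R_0-1)$ with positive coefficients, guaranteeing positivity whenever $R_0>1$. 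This reduces the obstacle to a bookkeeping exercise rather than a genuinely delicate estimate.
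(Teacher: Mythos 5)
Your proposal follows essentially the same route as the paper: the Jacobian at $E_i$, the quartic characteristic polynomial $x^4+B_1x^3+B_2x^2+B_3x+B_4=0$, verification of the Routh--Hurwitz conditions (your $(B_1B_2-B_3)B_3-B_1^2B_4>0$ is identical to the paper's $B_1B_2B_3-B_3^2-B_1^2B_4>0$), and non-existence for $R_0<1$ via the sign of the common numerator $a\alpha\beta k\lambda-c\delta\mu R_s$ in $Y_1,D_1,V_1$. The only difference is tactical: the paper verifies the final Hurwitz determinant by direct expansion into manifestly positive terms in $R_s,c,\delta,\mu,k\lambda$ rather than by your proposed reorganization as a polynomial in $(R_0-1)$, but both rest on the same substitutions and the same positivity conditions $R_0>1$, $R_s>0$.
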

	\begin{proof}
		The Jacobian matrix at the equilibrium $E_i$ is		
		\begin{align*}
			\begin{bmatrix}
				-\mu-kV_1 & 0 & 0	& -kX_1\\
				kV_1 & -\delta & 0 & kX_1\\
				0 & a & \gamma ( 1-\alpha )-\alpha\beta-\delta & 0\\
				0 & 0 & \alpha\beta & -c
			\end{bmatrix}.
		\end{align*}	
		\noindent
		The characteristic equation of the Jacobian matrix is given by		
		\begin{align*}
			(-c-x) \left(\delta  \mu +\delta  k V_1+k V_1 x+x^2+\delta  x+\mu  x\right) (-\alpha  \beta -\alpha  \gamma +\gamma -\delta -x)\\-\alpha  \beta  (a k x X_1+a k \mu  X_1)=0.\\
		\end{align*}		
		\noindent
		Comparing above equation with this equation	$x^4+B_1 x^3+B_2 x^2+B_3 x+B_4=0$, we have		
		\begin{align*}
			&B_1 =~ R_s+(c+\delta +k V_1+\mu)>0,\\
			&B_2 =~ R_s(\delta+kV_1+\mu+c)+kV_1(c+\delta)+\delta(c+\mu)+\mu>0,\\			
			&B_3 = ~\frac{ak\lambda\alpha\beta(R_s(c+\delta)+c\delta)}{R_s c\delta}>0,\\			
			&B_4 = ~a \alpha  \beta  k \lambda -R_s c \delta  \mu  >0,\\			
			&B_1B_2-B_3 =~  k^2V_1^2(R_s+\delta)+(R_s+\delta)(R_s+\delta+\mu)(\delta+\mu)+c^2(R_s+kV_1+\delta+\mu)\\&+c(R_s+kV_1+\delta+\mu)+k(aX_1\alpha\beta+V_1(R_s+\delta)(R_s+\delta+2\mu))>0,\\			
			&B_1B_2B_3-B_3^2-B_1^2B_4 = ~a^3 \alpha ^3 \beta ^3 k^3 \lambda ^3 (R_s+c) (R_s+\delta ) (c+\delta )+R_s^4 c^4 \delta ^4 \mu  (R_s+c+\delta )^2\\
			&+a \alpha  R_s^2 \beta  c^2 \delta ^2 k \lambda  (R_s+c+\delta )\Big(R_s^2 \left(c^2+c \delta +\delta ^2\right)+R_s c \delta  (c+\delta +2 \mu )+c^2 \delta ^2\Big)\\
			&+a^2 \alpha ^2 R_s \beta ^2 c \delta  k^2 \lambda ^2 \Big(R_s^3 (c+\delta )+R_s^2(2 c^2+3 c \delta +2 \delta ^2)+R_s (c \delta  \mu +(c+\delta )^3)\\&+c \delta  (c+\delta )^2\Big)>0.		
		\end{align*}
		
		\noindent
		So, $B_1$, $B_2$, $B_3$, and $B_4$ satisfy all the conditions of Routh-Hurwitz criteria when $R_0 > 1$ and $R_s>0$. Thus, all the eigenvalues of the matrix are either negative or have negative real parts. Therefore, the endemic equilibrium is locally asymptotically stable. 
	\end{proof}

	\subsubsection{Global stability analysis}
	In order to prove the global stability of equilibria, Theorem 7.1 of the book \cite{2015_martcheva_introduction} is used.  Two suitable Lyapunuv functions are defined as follows:
	\begin{align*}
		\mathcal{L}_1(t)&=X_0\left(\dfrac{X(t)}{X_0}-1-\ln\left (\dfrac{X(t)}{X_0}\right )\right )+Y(t)+\dfrac{\delta}{a}D(t)+\dfrac{\delta(-\gamma(1-\alpha)+\alpha\beta+\delta)}{a\alpha\beta}V(t).\\
		\mathcal{L}_2(t)&=X_1\left(\frac{X(t)}{X_1}-1-\ln\left (\frac{X(t)}{X_1}\right )\right)+Y_1\left(\frac{Y(t)}{Y_1}-1-\ln\left (\frac{Y(t)}{Y_1}\right )\right)\nonumber\\
		&+\frac{\delta D_1}{a}\left(\frac{D(t)}{D_1}-1-\ln\left (\frac{D(t)}{D_1}\right )\right)+\frac{\delta R_s V_1}{a\alpha\beta}\left(\frac{V(t)}{V_1}-1-\ln\left (\frac{V(t)}{V_1}\right )\right), 
	\end{align*}
	Both functions are radially unbounded and globally positively definite. Therefore, the choices of Lyapunov functions are appropriate.
	\begin{theorem}\label{Theorem 5}
		The disease-free equilibrium point $E_u$ is globally asymptotically stable if  $R_0\leq 1$.
	\end{theorem}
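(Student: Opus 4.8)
The plan is to use the Lyapunov function $\mathcal{L}_1$ already defined, with $X_0 = \lambda/\mu$, and to apply the Lyapunov--LaSalle invariance principle (Theorem 7.1 of \cite{2015_martcheva_introduction}). Since $\mathcal{L}_1$ is stated to be radially unbounded and globally positive definite on the interior of the invariant region $\mathcal{D}$, it suffices to show that its derivative along trajectories of \eqref{eq1} is non-positive when $R_0 \le 1$, and then to pin down the largest invariant subset on which the derivative vanishes.

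First I would differentiate $\mathcal{L}_1$ along solutions, writing the coefficient of $V$ in $\mathcal{L}_1$ as $\delta R_s/(a\alpha\beta)$ via the identity $-\gamma(1-\alpha)+\alpha\beta+\delta = R_s$, so that
$$\frac{d\mathcal{L}_1}{dt} = \left(1 - \frac{X_0}{X}\right)\dot X + \dot Y + \frac{\delta}{a}\dot D + \frac{\delta R_s}{a\alpha\beta}\dot V.$$
Substituting the right-hand sides of \eqref{eq1} and using $\lambda = \mu X_0$, the $X$-terms collapse to $-\mu (X-X_0)^2/X$, and the coefficients $\delta/a$ and $\delta R_s/(a\alpha\beta)$ have been chosen precisely so that the cross terms $kVX$, $\delta Y$, and $\tfrac{\delta R_s}{a}D$ each cancel in pairs. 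What survives is
$$\frac{d\mathcal{L}_1}{dt} = -\mu\frac{(X-X_0)^2}{X} + \left(\frac{k\lambda}{\mu} - \frac{c\delta R_s}{a\alpha\beta}\right)V.$$

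The next step is to recognize the coefficient of $V$ in terms of $R_0$. Since the denominator in the stated formula factors as $c\delta R_s$, one has $R_0 = \dfrac{ak\lambda\alpha\beta}{c\delta R_s\,\mu}$, hence $\frac{k\lambda}{\mu} = \frac{c\delta R_s}{a\alpha\beta}R_0$, and the bracket equals $\frac{c\delta R_s}{a\alpha\beta}(R_0 - 1)$. Therefore
$$\frac{d\mathcal{L}_1}{dt} = -\mu\frac{(X-X_0)^2}{X} + \frac{c\delta R_s}{a\alpha\beta}(R_0-1)\,V \le 0$$
on $\mathcal{D}$ whenever $R_0 \le 1$ and $R_s > 0$, since both terms are then non-positive.

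Finally I would invoke LaSalle to identify the largest invariant set in $\{d\mathcal{L}_1/dt = 0\}$. For $R_0 < 1$ the derivative vanishes only where $X = X_0$ and $V = 0$; for $R_0 = 1$ it vanishes on $\{X = X_0\}$, but imposing invariance forces $\dot X = 0$, i.e. $-kVX_0 = 0$, so $V = 0$ again. On any invariant trajectory with $V = 0$, $\dot V = 0$ gives $\alpha\beta D = 0$, hence $D = 0$, and then $\dot D = 0$ gives $aY = 0$, hence $Y = 0$. Thus the largest invariant set reduces to the singleton $\{E_u\}$, and the invariance principle yields global asymptotic stability. The hard part will be the boundary case $R_0 = 1$: there the $V$-term no longer contributes to the sign of the derivative, so the collapse to $E_u$ must be extracted purely from invariance rather than from strict negativity, which is exactly the subtlety that forces the LaSalle argument (rather than a direct Lyapunov strict-descent argument) in this regime.
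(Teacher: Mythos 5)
Your proof is correct and follows essentially the same route as the paper: the identical Lyapunov function $\mathcal{L}_1$ (the paper's coefficient $\delta(-\gamma(1-\alpha)+\alpha\beta+\delta)/(a\alpha\beta)$ is exactly your $\delta R_s/(a\alpha\beta)$), the same telescoping of cross terms leading to $\frac{d\mathcal{L}_1}{dt}=-\mu\frac{(X-X_0)^2}{X}+\frac{c\delta R_s}{a\alpha\beta}(R_0-1)V$ (the paper writes the first term equivalently as $\lambda\left(2-\frac{X}{X_0}-\frac{X_0}{X}\right)$), and the same Lyapunov--LaSalle conclusion. If anything, your identification of the largest invariant set — in particular using invariance to force $V=0$, then $D=0$, then $Y=0$ in the boundary case $R_0=1$ — is carried out more carefully than the paper's terse assertion that the zero set of $\frac{d\mathcal{L}_1}{dt}$ consists of $E_u$ alone.
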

	\begin{proof}
		To prove the asymptotic global stability of disease-free equilibrium point, the first Lyapunov function $\mathcal{L}_1(t)$ (defined above) is considered.
		\begin{align*}
			\mathcal{L}_1(t)=X_0\left(\frac{X(t)}{X_0}-1-\ln\left (\frac{X(t)}{X_0}\right )\right )+Y(t)+\frac{\delta}{a}D(t)+\frac{\delta(-\gamma(1-\alpha)+\alpha\beta+\delta)}{a\alpha\beta}V(t).	
		\end{align*}		
		\noindent
		Now,		
		\begin{align*}
			\frac{d\mathcal{L}_1(t)}{dt}&=X_0\left (\frac{X'}{X_0}-\frac{X'}{X}\right)+Y'(t)+\frac{\delta}{a}D'(t)+\frac{\delta(-\gamma(1-\alpha)+\alpha\beta+\delta)}{a\alpha\beta}V'(t)\\
			&=\left(1-\frac{X_0}{X}\right)X'(t)+Y'(t)+\frac{\delta}{a}D'(t)
			+\frac{\delta(-\gamma(1-\alpha)+\alpha\beta+\delta)}{a\alpha\beta}V'(t)\\
			&=\left(1-\frac{X_0}{X(t)}\right)(\lambda-\mu X(t)-kV(t)X(t))+kV(t)X(t)\\
			&+\frac{\delta}{a} \left(aY(t)+\gamma(1-\alpha)-\alpha\beta-\delta D \right)
			+\frac{\delta(-\gamma(1-\alpha)+\alpha\beta+\delta)}{a\beta}(\beta D-cV)\\
			&=\lambda\left(2-\frac{X}{X_0}-\frac{X_0}{X}\right)+\frac{a\alpha\beta k\lambda-c\delta\mu(-\gamma(1-\alpha)+\alpha\beta+\delta)}{a\alpha\beta\mu}V.
		\end{align*}		
		\noindent
		From the article of Kajiwara et al.\cite{2012_Kajiwara}, $\left(2-\dfrac{X}{X_0}-\dfrac{X_0}{X}\right)\leq 0$ and since, $R_0\leq 1$, $a\alpha\beta k\lambda-c\delta\mu(-\gamma(1-\alpha)+\alpha\beta+\delta)\leq 0$. Therefore, $ \dfrac{{d\mathcal{L}}_1(t)}{dt}\leq 0$. We consider the largest invariant set $\mathcal{M}_1=\left\{(X,Y,D,V) \in \mathbb{R}^4:~\dfrac{d\mathcal{L}_1(t)}{dt}=0\right\}$ . 
		The solution of the equation $\dfrac{d\mathcal{L}_1(t)}{dt}=0$ is  $\left(\dfrac{\lambda}{\mu},0,0,0\right)$ only, which is the equilibrium point $E_u$. So, based on the Lyapunov-LaSalle invariance principle \cite{2015_martcheva_introduction}, the disease-free equilibrium point, $E_u$ is globally asymptotically stable whenever $R_0\leq 1$.	
	\end{proof}
	\begin{theorem}\label{Theorem 6}
		The endemic equilibrium $E_i$ is globally asymptotically stable if $R_0>1$.
	\end{theorem}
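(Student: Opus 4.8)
The plan is to use the Volterra-type Lyapunov functional $\mathcal{L}_2(t)$ already exhibited above, which is built from the convex kernel $g(u)=u-1-\ln u$ and is (as stated) radially unbounded and globally positive definite with its unique minimum at $E_i$. First I would differentiate $\mathcal{L}_2$ along solutions of \eqref{eq1}. Since $\frac{d}{dt}\big[Z_1\,g(Z/Z_1)\big]=(1-\tfrac{Z_1}{Z})Z'$ for each compartment, this gives
\begin{align*}
\frac{d\mathcal{L}_2}{dt}=\Big(1-\frac{X_1}{X}\Big)X'+\Big(1-\frac{Y_1}{Y}\Big)Y'+\frac{\delta}{a}\Big(1-\frac{D_1}{D}\Big)D'+\frac{\delta R_s}{a\alpha\beta}\Big(1-\frac{V_1}{V}\Big)V',
\end{align*}
into which I substitute the four right-hand sides of \eqref{eq1}, writing the third equation as $D'=aY-R_sD$ with $R_s=\alpha\beta-(1-\alpha)\gamma+\delta$.

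Next I would eliminate the parameters $\lambda,\delta,c$ using the equilibrium identities satisfied by $E_i$, namely
\begin{align*}
\lambda=\mu X_1+kV_1X_1,\qquad kV_1X_1=\delta Y_1,\qquad aY_1=R_sD_1,\qquad \alpha\beta D_1=cV_1.
\end{align*}
The $X$-contribution collapses to $-\mu\,(X-X_1)^2/X$ in the usual way, and the bilinear terms $kVX$, $\delta Y$, and $\tfrac{\delta R_s}{a}D$ cancel in consecutive pairs. The step that makes the recycling model work is the identity $\tfrac{\delta R_s c}{a\alpha\beta}=kX_1$, which follows by chaining $\tfrac{\delta R_s}{a}D_1=\delta Y_1=kV_1X_1$ with $cV_1=\alpha\beta D_1$; this is exactly what cancels the otherwise-stray terms $+kVX_1$ and $-\tfrac{\delta R_s c}{a\alpha\beta}V$. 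This is also where the precise Lyapunov weights $\tfrac{\delta D_1}{a}$ and $\tfrac{\delta R_s V_1}{a\alpha\beta}$ earn their keep, so the bookkeeping here is the main obstacle: one must confirm that the recycling constant $R_s$ threads through every cancellation rather than leaving an uncontrolled residual.

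After the dust settles I expect the derivative to reduce to the AM--GM-ready form
\begin{align*}
\frac{d\mathcal{L}_2}{dt}=-\mu\frac{(X-X_1)^2}{X}+kV_1X_1\left(4-\frac{X_1}{X}-\frac{XVY_1}{X_1V_1Y}-\frac{YD_1}{Y_1D}-\frac{DV_1}{D_1V}\right).
\end{align*}
The four displayed ratios have product identically equal to $1$, so by the arithmetic--geometric mean inequality their sum is at least $4$ and the bracketed factor is $\le 0$; together with $-\mu(X-X_1)^2/X\le 0$ this yields $\tfrac{d\mathcal{L}_2}{dt}\le 0$ on $\mathcal{D}$.

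Finally I would invoke the Lyapunov--LaSalle invariance principle. Equality forces $X=X_1$ and all four ratios equal to $1$; restricting to the largest invariant set then gives $X'=0$, whence $\lambda-\mu X_1-kVX_1=0$ combined with $\lambda=\mu X_1+kV_1X_1$ forces $V=V_1$, and the ratio conditions propagate this to $Y=Y_1$ and $D=D_1$. Hence the largest invariant subset of $\{\tfrac{d\mathcal{L}_2}{dt}=0\}$ is the singleton $\{E_i\}$, and therefore $E_i$ is globally asymptotically stable whenever $R_0>1$ (with $R_s>0$ in force, as assumed throughout).
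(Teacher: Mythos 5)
Your proposal is correct and follows essentially the same route as the paper: the same Lyapunov functional $\mathcal{L}_2$, the same reduction (via the equilibrium identities, with $kV_1X_1=\delta Y_1$ making your coefficient identical to the paper's $\delta Y_1$) to the negative square term plus the four-ratio bracket, the same AM--GM step on ratios with product $1$, and the same Lyapunov--LaSalle conclusion. Your explicit tracking of the cancellation identity $\tfrac{\delta R_s c}{a\alpha\beta}=kX_1$ and the invariant-set argument simply spells out details the paper leaves implicit.
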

	\begin{proof}
		We are approaching the problem by taking into account the second Lyapunov function $\mathcal{L}_2(t)$.	
		\begin{align} \label{lyapunov_2}
			\mathcal{L}_2(t)&=X_1\left(\frac{X(t)}{X_1}-1-\ln\left (\frac{X(t)}{X_1}\right )\right)+Y_1\left(\frac{Y(t)}{Y_1}-1-\ln\left (\frac{Y(t)}{Y_1}\right )\right)\nonumber\\
			&+\frac{\delta D_1}{a}\left(\frac{D(t)}{D_1}-1-\ln\left (\frac{D(t)}{D_1}\right )\right)+\frac{\delta R_s V_1}{a\alpha\beta}\left(\frac{V(t)}{V_1}-1-\ln\left (\frac{V(t)}{V_1}\right )\right), 
		\end{align}		
		\noindent
		where $R_s$ is defined above. \\Differentiating \eqref{lyapunov_2} with respect to $t$, we have		
		\begin{align*}
			\frac{d\mathcal{L}_2(t)}{dt}&=X_1\left (\frac{X'(t)}{X_1}-\frac{X'(t)}{X(t)}\right)+Y_1\left (\frac{Y'(t)}{Y_1}-\frac{Y'(t)}{Y(t)}\right)\\
			&+\frac{\delta D_1}{a}\left (\frac{D'(t)}{D_1}-\frac{D'(t)}{D(t)}\right)+\frac{\delta R_s V_1}{a\alpha\beta}\left (\frac{V'(t)}{V_1}-\frac{V'(t)}{V(t)}\right)\\
			&=\left(1-\frac{X_1}{X(t)}\right)X'(t)+\left(1-\frac{Y_1}{Y(t)}\right)Y'(t)+\frac{\delta D_1}{a}\left(1-\frac{D_1}{D(t)}\right)D'(t)\\
			&+\frac{\delta R_s V_1}{a\alpha\beta}\left(1-\frac{V_1}{V(t)}\right)V'(t)\\
			&=-\mu X(t)\left(1-\frac{X_1}{X(t)}\right)^2+\delta Y_1\left ( 4-\frac{Y(t)D_1}{D(t)Y_1}-\frac{D(t)V_1}{D_1V(t)}-\frac{X(t)Y_1V(t)}{X_1Y(t)V_1}-\frac{X_1}{X(t)}\right).
		\end{align*}	
		\noindent
		\begin{flushright}
			$\left[\text{Using}~ D_1=\dfrac{a}{R_s}Y_1 ~\text{and}~ V_1=\dfrac{a\alpha\beta}{R_s c}Y_1\right]$
		\end{flushright}
		Clearly, the first term of the above equation is negative unless $X(t)=X_1$. To prove the negativity of the second  term, 
		define $$~x_1=\dfrac{Y(t)D_1}{D(t)Y_1},~x_2=\dfrac{D(t)V_1}{D_1V(t)},~ x_3=\dfrac{X(t)Y_1V(t)}{X_1Y(t)V_1},~x_4=\dfrac{X_1}{X(t)}.$$ It is clear that $x_i\geq 0$ for $i=1,2,3,4$ and $x_1x_2x_3x_4=1$. Applying  $A.M \geq G.M$, 		
		\begin{align*}
			&\dfrac{x_1+x_2+x_3+x_4}{4}\geq(x_1x_2x_3x_4)^{\frac{1}{4}},\\
			\implies& \dfrac{Y(t)D_1}{D(t)Y_1}+\dfrac{D(t)V_1}{D_1V(t)}+\dfrac{X(t)Y_1V(t)}{X_1Y(t)V_1}+\dfrac{X_1}{X(t)}\geq 4.
		\end{align*}	
		\noindent
		Hence, $\dfrac{{d\mathcal{L}}_2(t)}{dt}\leq0$ when $R_0>1$. Let the largest invariant set $\mathcal{M}_2=\left\{\left(X,Y,D,V\right) \in \mathbb{R}^4:\dfrac{{d\mathcal{L}}_2(t)}{dt}=0\right\}$.
		It is noticed that the solution of the equation $\dfrac{d\mathcal{L}_2(t)}{dt}=0$ is only
		$(X_1,Y_1,D_1,V_1)$, which is the equilibrium point $E_i$. So, the endemic equilibrium point is globally asymptotically stable whenever $R_0> 1$ based on the Lyapunov-LaSalle invariance principle.	
	\end{proof}
	\section{Bifurcation Analysis}
	In this section, the bifurcation analysis of the proposed model \eqref{eq1} is performed. Stability criteria of $E_0$ indicates that $E_0$ will be stable if $\mu>\dfrac{ak\alpha\beta\lambda}{R_sc\delta}(=\mu^*)$, otherwise it will be unstable \textit{i.e.}  the stability of $E_0$ changes as $\mu$ crosses the threshold value $\mu=\mu^*$. On the other hand, the endemic equilibrium $E_i$ (although $E_i$ is not feasible in the context of biology) becomes unstable if $\mu>\mu^*$. Thus, the equilibrium points  coincide and exchange their stability which leads to the transcrical bifucation of the system \eqref{eq1} around the point $E_0$ at $\mu=\mu^*$ with natural death rate parameter $\mu$ as the bifurcation parameter. It is seen that the Jacobian matrix \eqref{Jacobian_disease free} has one zero eigenvalue when $\mu=\mu^*$. 
	In order to verify the existence of transcritical bifurcation analytically, Sotomayor’s theorem \cite{2013_Perko_differential} is applied on the system \eqref{eq1} around the disease-free equilibrium point $E_0$ when $\mu=\mu^*$.
	\noindent	
	The R.H.S of system \eqref{eq1} can be represented in vector form as
	\begin{align}
		f(X,Y,D,V)=
		\begin{pmatrix} \label{f function bifurcation}
			\lambda-\mu X-kVX\\
			kVX-\delta Y\\
			aY+\gamma(1-\alpha)D-\alpha\beta D-\delta D\\
			\alpha \beta D-cV
		\end{pmatrix}
	\end{align}
	Differentiating  the function $f(X,Y,D,V)$ partially with respect to $\mu$, it is obtained that $f_{\mu}(X,Y,D,V)=(-X~~0~~0~~0)^T$.
	The Jacobian matrix \eqref{Jacobian_disease free} at disease-free equilibrium point ($E_0$) and its transpose matrix have an eigenvalue $\xi=0$ with eigenvectors $$v=\left(\dfrac{k\lambda}{\mu^2}~~\dfrac{a\lambda}{\mu \delta}~~\dfrac{ak\lambda}{R_s\mu\delta}~~1\right)^T
	\mbox{and}~ 
	w=\left(0~~\dfrac{a\alpha\beta}{\delta R_s}~~\dfrac{\alpha\beta}{R_s}~~1\right)
	^T.$$\\
	Now, at $E_0$ we have verified the following transvesality conditions:
	\begin{enumerate}[(i)]
		\item $w^T f_{\mu}(E_0,\mu^*)=\left(0~~\dfrac{a\alpha\beta}{\delta R_s}~~\dfrac{\alpha\beta}{R_s}~~1\right)\left(-\dfrac{\lambda}{\mu}~0~0~0\right)^T=0$
		\item $w^T[Df_{\mu}(E_0,\mu^*)v]=-\dfrac{k\lambda}{\mu^2}\neq 0$
		\item $w^T[D^2f(E_0,\mu^*)(v,v)]=-\dfrac{2\alpha\beta k^2 \lambda}{R_s \delta \mu^2}\neq 0$
	\end{enumerate}
	All the notations used here are same as in the book of  Lawrence Perko \cite{2013_Perko_differential}. Therefore, all three conditions of Sotomayor’s theorem hold and the system \eqref{eq1} undergoes transcritical bifurcation at $E_0$ as the natural death rate of uninfected hepatocytes $\mu$, crosses the threshold value $\mu^*$.

	\begin{table}[h]
		\begin{center}	
			\caption{\label{Table-2}  Estimation of parameters.}		
			\begin{tabular}{||c||l||c||c||}
				\hline
				Parameters  & ~~~~~Descriptions  	 Values 		& Estimated value	 &Units \\ [.5ex]
				
				&&(Baseline value)&\\
				\hline
				$\lambda$ 	& Constant growth rate of & $2.67\times10^7$   & $\mbox{cells  ml}^{-1}\mbox{day}^{-1}$  \\
				
				& uninfected hepatocytes& & \\
				\hline
				$\mu$		& Natural death rate of 			& $0.096$      & $\mbox{day}^{-1}$   \\
				
				& uninfected hepatocyte& & \\
				\hline
				$k$			& Virus-to-cell infection rate				&$3.38\times10^{-12}$&$\mbox{ml virion}^{-1}\mbox{day}^{-1}$    						\\
				\hline
				$a$			& Production rate of capsid  & 157     &$\mbox{capsids  cell}^{-1}\mbox{day}^{-1}$    \\
				
				& from infected hepatocyte	&      &    	\\
				\hline
				$\beta$ 	&  Production rate of virus 			& 1.83    &  $\mbox{day}^{-1}$   	\\
				
				& from capsid& & \\
				\hline
				$\delta$	& Death rate of infected   & 0.24     &  $\mbox{day}^{-1}$    \\
				&  hepatocyte \& capsid &      &    \\ 
				\hline
				$c$ 		& Death rate of virus					& 3.93     &  $\mbox{day}^{-1}$   \\
				\hline
				$\alpha$ 	& Volume fraction of HBV  			&0.84      & unitless    \\
				&   DNA-containing capsid 			&    &   \\
				\hline
				$\gamma$ 	&  Capsid to capsid production rate 		&  1.24     &$\mbox{day}^{-1}$  \\
				& recycling rate of capsids& & \\
				\hline
			\end{tabular}
		\end{center}
	\end{table}	
	\section{Parameter estimation and model calibration}
	In this section, in order to enhance the realism of viral dynamics and to improve the reliability of our robust predictions, the proposed model is  calibrated  using  experimental data collected from the previously published article of Asabe et al.\cite{2009_Asabe}. In their research, Asabe et al. examined the effects of varying inoculum sizes on the kinetics of viral spread and immune system  in a cohort of nine young, healthy chimpanzees. In adherence to ethical guidelines,  handling procedures of all animals were carried out with the sole intention of human use. It was observed that the viral load  for six out of nine  chimpanzees reached a peak level of $2\times10^{10}$ within three weeks from the time of inoculation. Later, the virion load decreased within 15 weeks and reached  below the detection level. On the other hand, this experiment also documented that the remaining three chimpanzees developed chronic infection. The concentration of HBV DNA of these three chimpanzees (1603, 1616, A2A007) who persisted infection and another chimpanzee (1618) which developed acute infection are considered in order to estimate the model parameters and  validate our model. In each case, the model parameters are estimated by minimizing the sum-of-squares error (SSE) which is given by 
	\begin{align}
		SSE=\sum_{i=1}^{n}(P_i-p(i))^2, 
	\end{align} 
	where $P_i$ and $p(i)$ denote the experimental data and model solution, respectively. Four sets of parameters' values are obtained for four chimpanzees. For further analysis of the system \eqref{eq1} and numerical simulation, the average value of each parameter will be used throughout the study. The average values of parameters are given in Table \ref{Table-2}. In addition, the estimated parameter  values  are  compared with the existing values in the literature \cite{2007_Dahari,2006_Murray}, and it is seen that there is no significant disparities between the two sets of values. 
	
	In Figure \ref{AllChimpanzee}, the experimental data and solution of the proposed  model are compared. It is observed that for all four chimpanzees, the proposed model effectively captures the infection dynamics. Although the model solution agrees well with the experimental data, there are some discrepancies. The possible reasons for this could be that many crucial factors of virus dynamics such as the roles of immune system, intracellular delay are neglected in the proposed model. 
	\begin{center}
		\begin{figure}[h!]
			\centering
			\includegraphics[width=15cm,height=10cm]{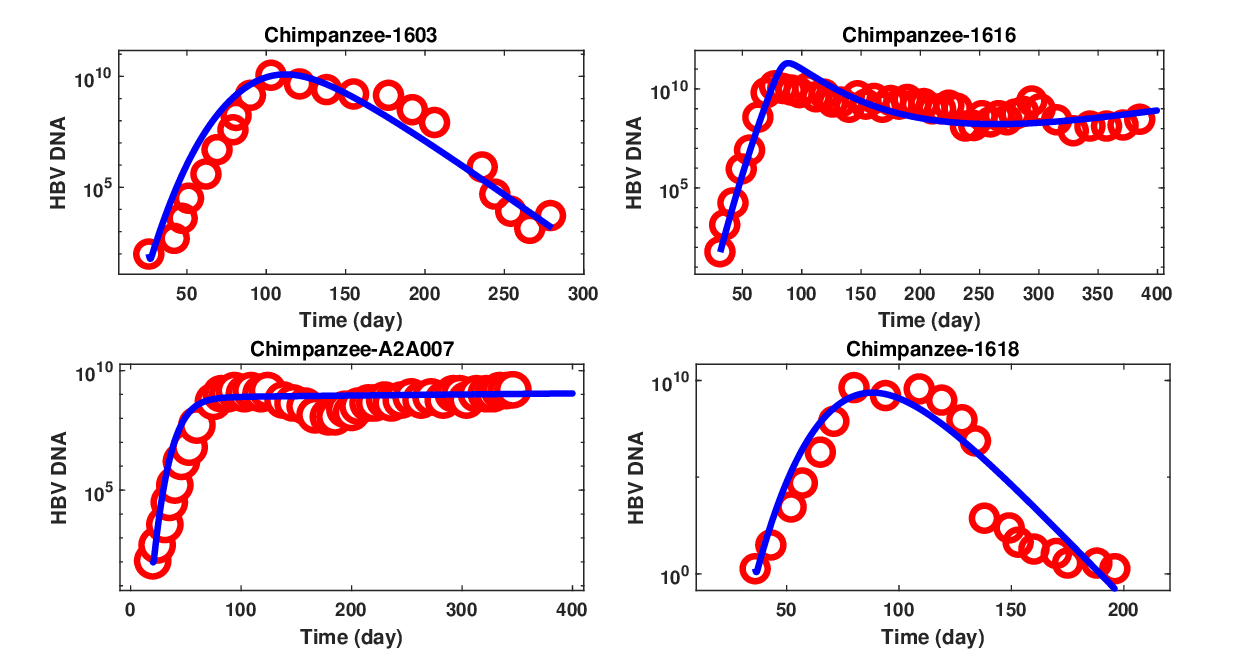}
			\caption{Experimental validation of the model. The experimental data is represented by red circles, while the solid blue line corresponds to the numerical solution of the system \eqref{eq1}.}
			\label{AllChimpanzee}
		\end{figure}
	\end{center}
	\section*{Discussion}
	\section{Numerical simulation} \label{Numerical simulation}
	From our theoretical results it is seen that $R_0$ and $R_s$ are two crucial threshold numbers for the dynamics of HBV infection. Here, we numerically study the stability of the model for different values of $R_0$. The numerical simulation examines the asymptotic behavior of the system \eqref{eq1} for different cases. The system \eqref{eq1} is solved numerically 
	and the results are plotted graphically. We utilize the value of parameters from Table \ref{Table-2}.
	In this context, we consider three different initial conditions, as $ic_1=(2.56\times10^{8}, 0.99\times10^{8}, 1.60\times 10^{10}, 0.369\times 10^{10})$, $ic_2=(7.68\times10^{8}, 2.97\times10^{8}, 4.82\times 10^{10}, 1.10\times 10^{10})$ \mbox{and} $ic_3=(12.79\times10^{8} 4.95\times10^{8}, 8.04\times 10^{10}, 1.85\times 10^{10})$. In addition,  two sets of parameters are chosen, the first one provides $R_0<1$, and the second one leads to $R_0>1$.
	\subsection{Disease free equilibrium }
	In order to examine the disease dynamics for $R_0<1$, the value of $k$ is different from Table \ref{Table-2}, namely $k=3\times10^{-13}$ \cite{2007_Ciupe}, is chosen while the values of other parameters remain unchanged. The resulting dynamics is represented in Figure \ref{fig_1}. Consider the duration of the simulation as 500 days. For each initial condition, the concentration level of uninfected hepatocytes increases gradually, and the concentration level of infected hepatocytes decreases with time before stabilizing to the disease-free equilibrium $E_u=(2.6\times10^9,0,0,0)$ at around $t=400$. Moreover, the dynamics of HBV DNA-containing capsids and virions is something different. Initially, both concentrations of capsids and virions increase progressively, reach the peak level, then decrease continuously and approach zero. Therefore, the disease-free equilibrium is globally asymptotically stable, and this supports the theoretical result stated in Theorem \ref{Theorem 5}. Additionally, this results also demonstrates the clearance of HBV infection in the presence of the effects of capsid recycling.
	\begin{center}
		\begin{figure}[h!]
			\centering
			\includegraphics[width=12cm,height=9cm]{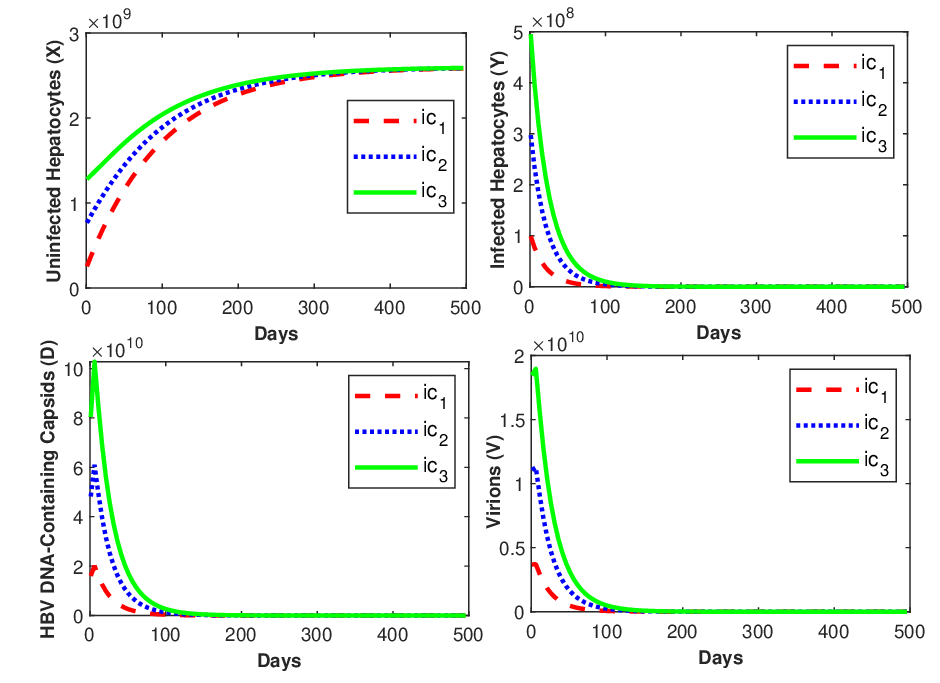}
			\caption{The dynamics of the system \eqref{eq1} while $R_0<1$ with three different initial states $ic_1, ic_2, ic_3$.}
			\label{fig_1}
		\end{figure}
	\end{center}
	\subsection{Endemic equilibrium ($R_0>1$)}
	In order to analyze the stability of endemic equilibrium numerically, we choose $k=3\times10^{-12}$. The dynamics in this case are shown in Figure \ref{fig_2}. It is seen that the concentration level of uninfected hepatocytes, infected hepatocytes, HBV DNA-containing capsids, and viruses initially oscillate for some time and slowly converge to the endemic equilibrium point asymptotically. Consequently, the numerical results agree with the theoretical results stated in Theorem \ref{Theorem 6}.
	\begin{figure}[h!]
		\centering
		\includegraphics[width=12cm,height=9cm]{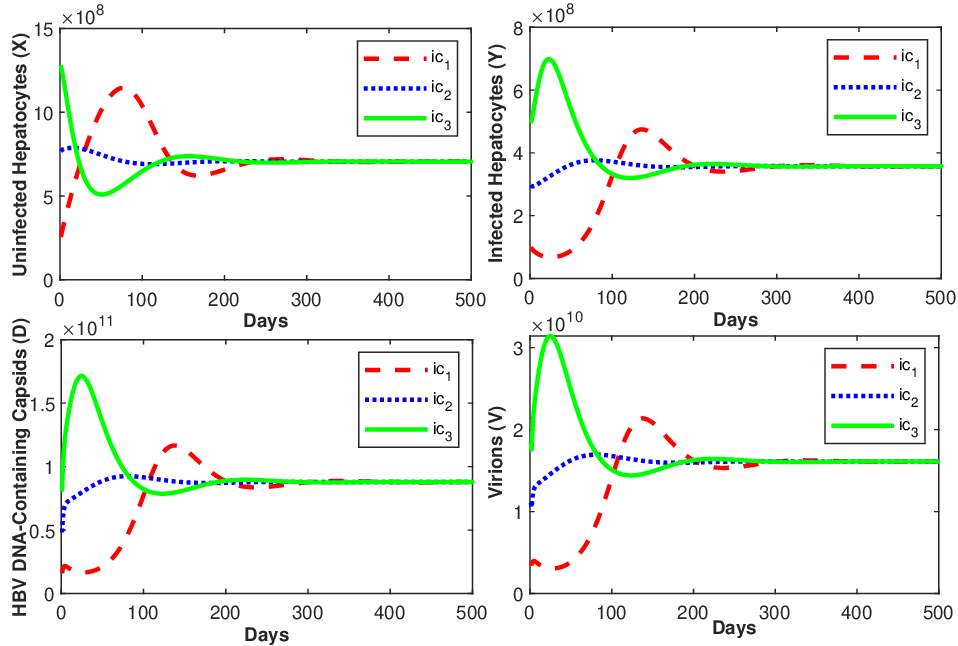}
		\caption{The dynamics of the system \eqref{eq1} while $R_0>1$ with three different initial states $ic_1, ic_2, ic_3$.}
		\label{fig_2}
	\end{figure}
	\section{Comparison of the model} \label{Comparison of the model}
	In this section, it is demonstrated how the dynamics of the system \eqref{eq1} changes when the capsid-to-capsid production rate ($\gamma$) is included in the model.
	\begin{enumerate}[(i)]
		\item \textbf{BMwoR:} Basic HBV model where the effects of ``recycling" of HBV DNA-containing capsid is not considered. In this case  $\gamma=0$ and $0<\alpha<1$.
		\item \textbf{BMwR:} Basic HBV model where the effects of ``recycling" of HBV DNA-containing capsid is considered. In this case  $0<\alpha<1,$ and $\gamma>0$.
	\end{enumerate}
	The variations in the concentration level of uninfected hepatocytes, infected hepatocytes, HBV capsids, and viruses are presented in Figure \ref{fig_3}. It is observed that when the effects of recycling of capsids is incorporated in the model, the equilibria remains stable, but the stability level of uninfected hepatocytes decreases significantly, whereas the stability levels of infected hepatocytes, HBV DNA-containing capsids, and virions increase in large amount. So, Figure \ref{fig_3} points out that the inclusion of recycling of capsids in the HBV  model makes momentous differences in the dynamics of infection.
	\begin{figure}[h!]
		\centering 
		\includegraphics[width=12cm,height=9cm]{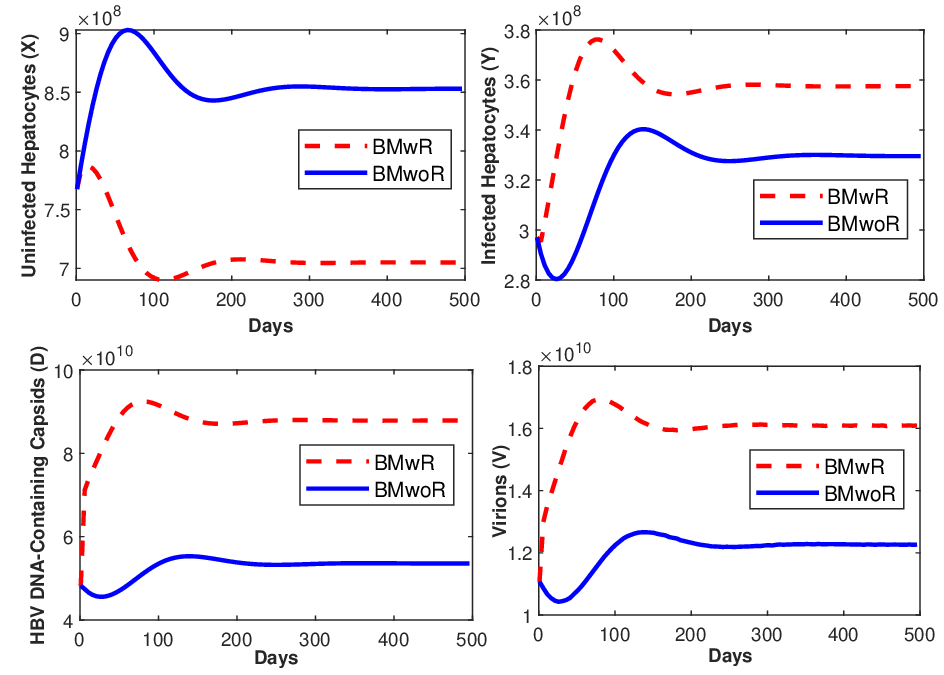}
		\caption{Comparison of model. The parameter values are taken from Table  \ref{Table-2}. Here $(i)$ BMwoR: Basic model without recycling of capsid $(ii)$ BMwR: Basic model with recycling of capsids.}
		\label{fig_3}
	\end{figure}
	\subsection{Effects of volume fraction ($\alpha$) of  HBV DNA-containing capsids}
	The effects of volume fraction of capsids ($\alpha$) on the system \eqref{eq1} are studied for two cases. In the first case,  impacts of recycling of capsids on the system \eqref{eq1} are ignored, while in the second case, it is considered. In Figure \ref{fig_4},  effects of $\alpha$  in the absence of recycling of capsids are demonstrated. The numerical simulation is performed for six values of $\alpha$, namely, $\alpha=0.5, 0.6, 0.7, 0.8, 0.9 ~\& ~1.0$ while keeping fixed  the other parameters. 
	The condition $R_s>0$ is satisfied for every value of $\alpha$. In all cases, $R_0>1$ \textit{i.e.} the system converges to corresponding endemic equilibrium points. Figure \ref{fig_4} shows that the uninfected hepatocytes and HBV capsids decline, but infected hepatocytes and viruses progressively increase while  $\alpha$ increases.  
	
	However, when the recycling effects of capsids are included in the model, the reverse results are seen for uninfected, infected hepatocytes, and viruses in Figure \ref{fig_5}. There is no change observed in the dynamics of capsids, but the stability level significantly increases for the same value of $\alpha$. Consequently, when $\alpha=1.0$ (in this case recycling of capsids is ignored), it is clear that the concentration level of uninfected hepatocytes is at a highest level while that of infected hepatocytes, HBV capsids, and viruses get stabilized at a lowest level. Thus, the results for these two cases are significantly different.		
	The low value of volume fraction of capsids ($\alpha$) implies that a less number of capsids can produce  new viruses, and a large number of capsids get accumulated inside the hepatocytes. Therefore, an accumulation of core particles (capsids) within the infected hepatocytes can be a cause of severe infection  rather than the rapidly release of viruses. Thus, the role of $\alpha$ cannot be ignored in the design of strategies for controlling HBV infection.

	\begin{figure}[h!]
		\centering
		\includegraphics[width=12cm,height=9cm]{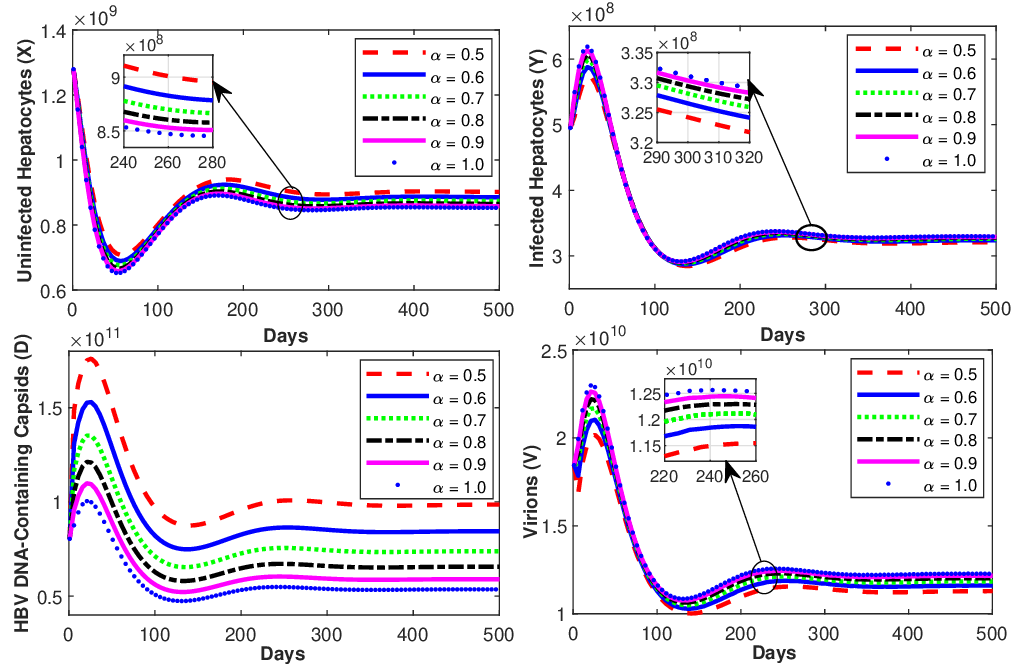}
		\caption{ The effects of $\alpha$ on the dynamical system \eqref{eq1} in absence of recycling of capsids ($\gamma=0$), and $ic_3$ is taken as the initial condition.}
		\label{fig_4}
	\end{figure}
	\begin{figure}[h!]
		\centering
		\includegraphics[width=12cm,height=9cm]{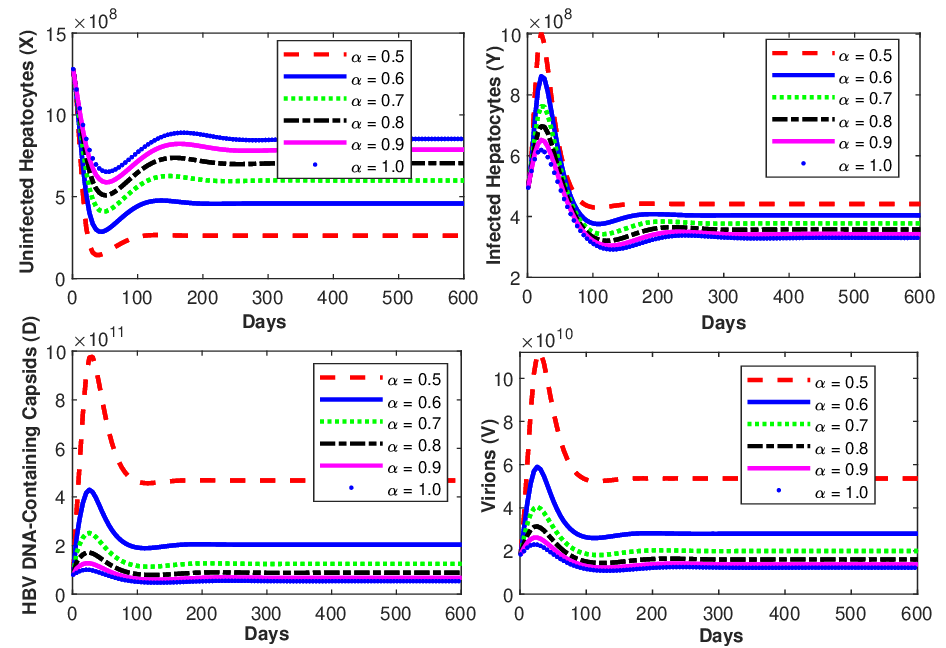}
		\caption{The effects of $\alpha$ on the dynamical system \eqref{eq1} when the effects of recycling of capsids  is considered ($\gamma>0$), and $ic_3$ is taken as the initial condition.}
		\label{fig_5}
	\end{figure}
	\subsection{Effects of capsid to capsid production rate ($\gamma$)}
	In Figure \ref{fig_6}, we present the impact of recycling rate or capsid-to-capsid production rate  ($\gamma$)  on all the four populations in the system \eqref{eq1}. The parameter values are same as in Table \ref{Table-2} except $\gamma$. Six  different values of $\gamma~ (=0.5, 1.0, 1.5, 2.0, 2.5, 3.0)$  are chosen  for simulation in such a way that the criteria for boundedness of solution $R_s>0$ is satisfied.  For these value of parameters, the corresponding values of $R_0$ are greater than unity. 
	The value of $R_0$  increases with increase of $\gamma$. Also, one more observation is seen that when $\gamma$ takes its minimum value zero {\textit{i.e.}} no capsid-to-capsid production rate is considered, system achieves its stability asymptotically with maximum concentration of uninfected hepatocytes and this concentration drops when $\gamma$ rises. It may be noted that the concentration level of infected hepatocytes decreases with the increase in $\gamma$ and attains its minimum value when $\gamma=0$. A similar scenario is observed for the HBV DNA-containing capsids and virions. In other words, the peak level for the virus compartment becomes smaller as $\gamma$ decreases. Therefore, the severity of the infection becomes less and disappears faster. Biologically, one can see that for high value of $\gamma$, the situation becomes more critical for the patient, and it is difficult to be cured.  So, this study indicates that the inclusion of capsid-to-capsid production rate in the model is crucial to study the dynamics of HBV infection in a more realistic way. 
	\begin{figure}[h!]
		\centering
		\includegraphics[width=12cm,height=9cm]{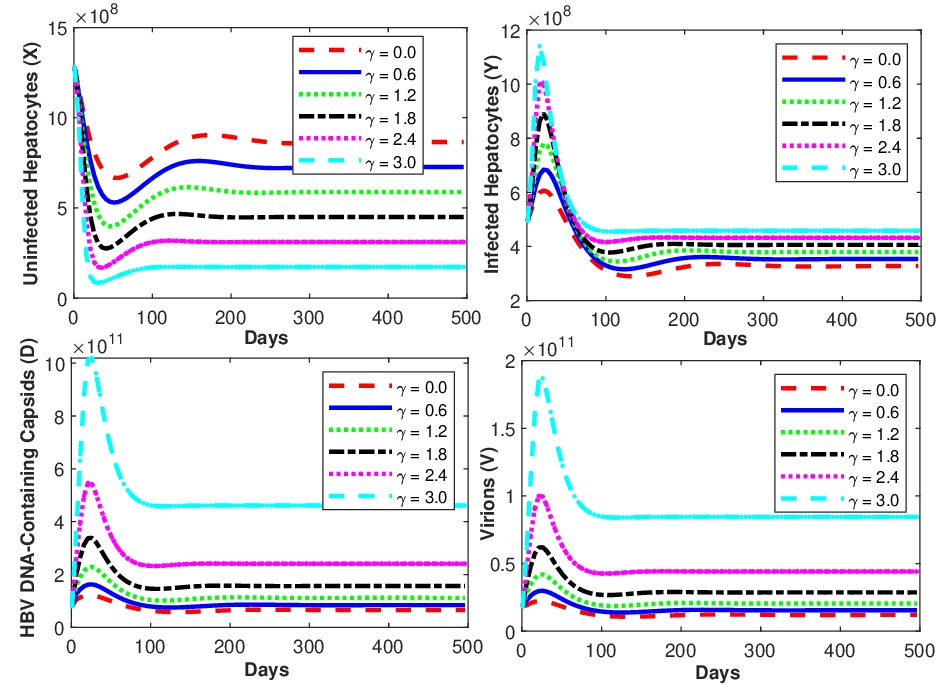}
		\caption{The effects of $\gamma$ on the dynamical system \eqref{eq1} w.r.t the initial condition $ic_3$.}
		\label{fig_6}
	\end{figure}
	\subsection{Effects of capsids to virus production rate ($\beta$)}
	For different values of $\beta$, the changes in dynamics of uninfected hepatocytes, infected hepatocytes, capsids, and viruses are shown in Figure \ref{fig_7}. In this case, the effects of recycling of capsids are not taken into account. Six values of $\beta$ (=0.6, 0.7, 0.8, 0.9, 1.0, 1.1) are considered keeping other parameters fixed  as shown in Table \ref{Table-2}. For each value of $\beta$, $R_s>0$ holds. When $\beta$ decreases, the concentrations of uninfected hepatocytes and capsids increase, while opposite trends are observed for infected hepatocytes and viruses. For a low value of $\beta$, the amount of viruses (newly produced from the infected hepatocytes) also remains low. 
	
	On the other hand, the effects of recycling of capsids in the  HBV dynamics reverse the impacts of $\beta$ which is shown in Figure \ref{fig_8}.  If $\beta$ decreases, concentration of  uninfected hepatocytes increases whereas the numbers of infected hepatocytes, HBV DNA-containing capsids, and viruses increase. For smaller values of $\beta$, the concentration of
	cccDNA increases inside the nucleus due the recycling of capsids. Therefore, the number of released virions into the extracellular space increases
	gradually over time. In a word,  recycling of capsids acts as a positive feedback
	loop in viral replication.
	In this case, the results indicate that rather than the rapid release of HBV viruses from the infected cell, the accumulation of HBV DNA-containing capsids inside infected cells can play a major role in the exacerbation of infection.
	A low value of $\beta$ makes things worse for the sufferer, thus making it difficult to cure. Small virion release rates ($\beta$) in the HBV replication process may be a risk factor for chronic hepatitis exacerbation over time. So, this discussion underlines the importance of recycling of capsids in cases of HBV infection.
	
	\begin{figure}[h!]
		\centering
		\includegraphics[width=12cm,height=10cm]{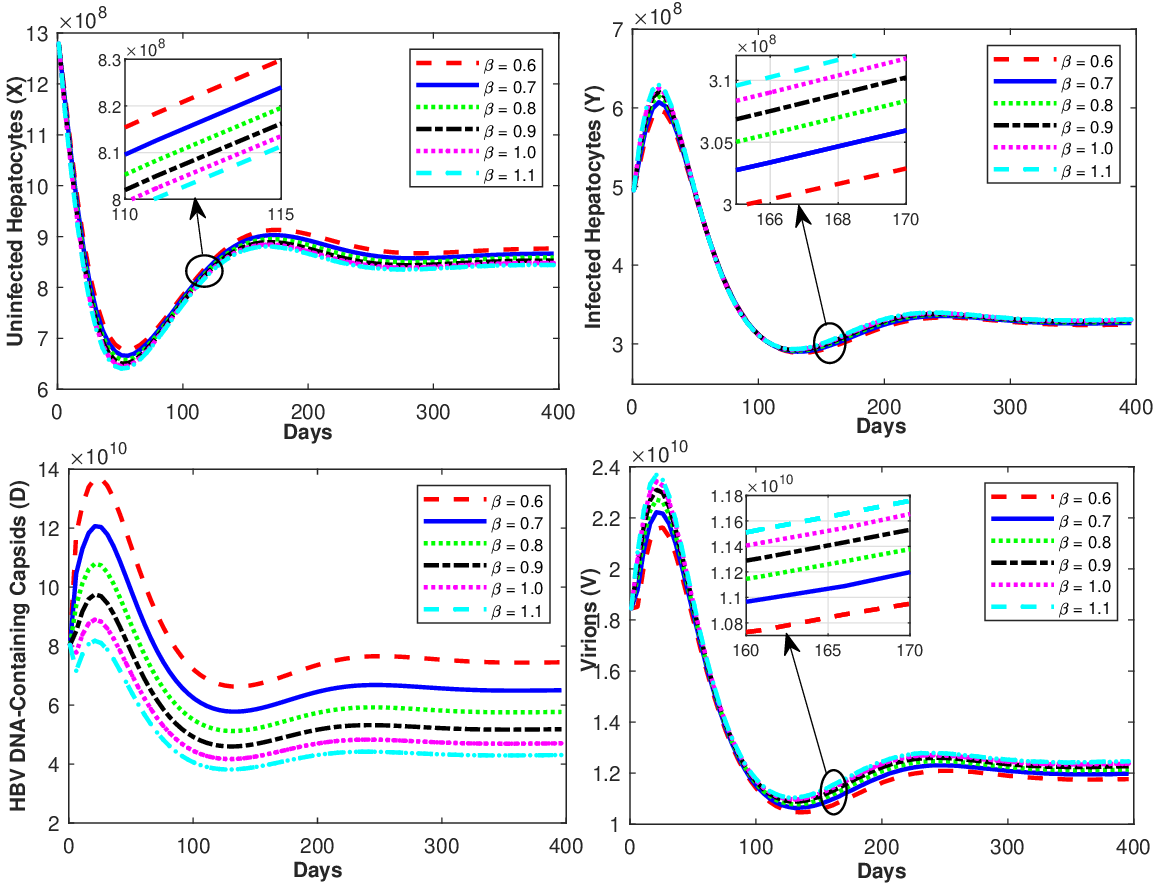}
		\caption{The effects of $\beta$ in the absence of recycling of capsids ($\gamma=0$) on the dynamical system \eqref{eq1} w.r.t the initial condition $ic_3$.}
		\label{fig_7}
	\end{figure}
	\begin{figure}[h!]
		\centering
		\includegraphics[width=12cm,height=9cm]{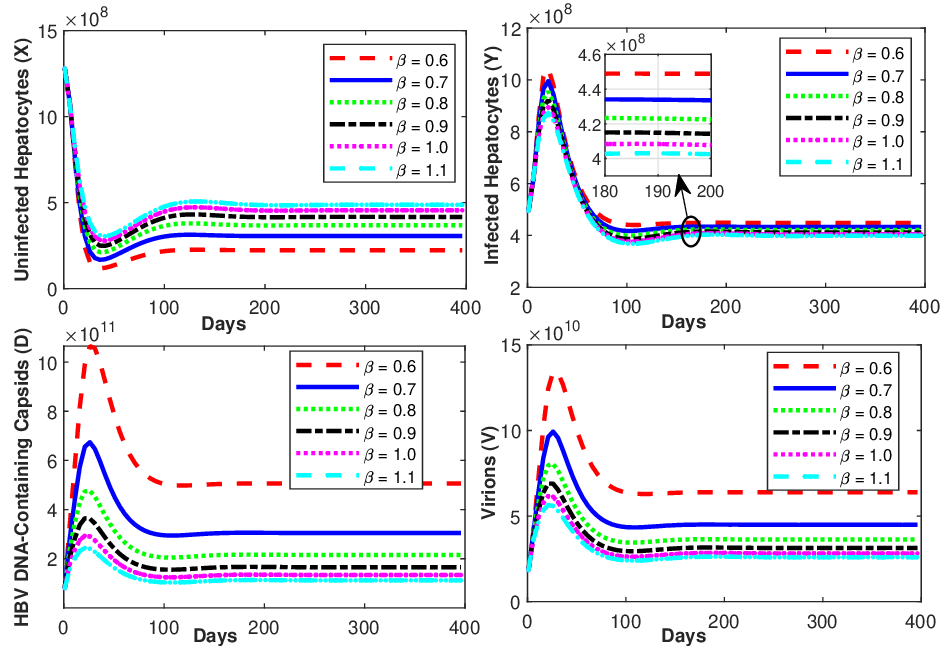}
		\caption{The effects of $\beta$ on the dynamical system \eqref{eq1} when effects of recycling of capsids is considered  ($\gamma>0$) w.r.t the initial condition $ic_3$.}
		\label{fig_8}
	\end{figure}
	In Table \ref{Table-Parameter effect}, all the above outcomes  that occurred as a result of parameter variation are listed.
	\begin{table}[h!]
		\footnotesize
		\begin{center}
			\begin{tabular}{|c|c|c|c|c|c|}
				\hline
				Recycling &Effect&Uninfected  & Infected 		& HBV				 & Hepatitis B \\
				Effect&of&Hepatocytes & Hepatocytes 	& DNA-containing 	 &  Viruses      \\
				($\gamma$)&parameter&($X$)&     ($Y$)       	& Capsids ($D$) 	 		 & $(V)$   		   \\
				\hline
				Not Considered& $\alpha$ increases & Decrease & Increase & Decrease & Increase\\
				\hline
				Considered& $\alpha$ increases & Increase & Decrease & Decrease & Decrease\\
				\hline
				Considered& $\gamma$ increases & Decrease & Increase & Increase & Increase\\
				\hline
				Not Considered& $\beta$ increases & Decrease & Increase & Decrease & Increase\\
				\hline
				Considered& $\beta$ increases & Increase & Decrease & Decrease & Decrease\\
				\hline
			\end{tabular}
			\vspace{0.1cm}
			
			\caption{\label{Table-1} 
				All the above outcomes occurred as a result of parameter variation.}
			\label{Table-Parameter effect}
		\end{center}
	\end{table}
	
	\begin{remark}
		From the above discussion, it is observed that the role of volume fraction of capsids ($\alpha$)  and virus production rate ($\beta$) in infection dynamics are similar in nature.
	\end{remark}
	\subsection{Elasticities of basic reproduction number with respect to the parameter $\alpha$, $\gamma$ and $\beta$}
	\noindent	
	The static quantity $R_0$ depends on the all parameters of the model \eqref{eq1}. In the prediction of evolution of HBV, $R_0$ plays an important role. The sensitivity analysis of $R_0$ is performed here in order to determine how $R_0$ responds to changes in parameters.  The elasticity of quantity $\mathbb{Q}$ with respect to the parameter $p$ is denoted by $\mathcal{E}_p^\mathbb{Q}$ \cite{2015_martcheva_introduction} and given by
	\begin{align*}
		\mathcal{E}_p^\mathbb{Q}=\frac{p}{\mathbb{Q}} \frac{\partial \mathbb{Q}}{\partial p}=\frac{\partial~ \ln \mathbb{Q}}{\partial ~\ln p}.
	\end{align*} 
	\noindent
	In general, elasticity of $\mathbb{Q}$ is positive if it increases with respect to $p$, and negative if it decreases with respect to $p$.
	\begin{align*}
		\mbox{Elasticity of}~R_0~\mbox{w.r.t}~\alpha~\left(\mathcal{E}_\alpha^{{R}_0}\right)&=\frac{\alpha}{R_0} \frac{\partial R_0}{\partial \alpha}=\frac{\delta -\gamma }{\alpha  (\beta +\gamma )-\gamma +\delta },\\
		\mbox{Elasticity of}~R_0~\mbox{w.r.t}~\gamma~\left(\mathcal{E}_\gamma^{{R}_0}\right)&=\frac{\gamma}{R_0} \frac{\partial R_0}{\partial \gamma}=\frac{(\alpha -1) \gamma +\delta }{\alpha  (\beta +\gamma )-\gamma +\delta },\\	
		\mbox{Elasticity of}~R_0~\mbox{w.r.t}~\beta~\left(\mathcal{E}_\beta^{{R}_0}\right)&=\frac{\beta}{R_0} \frac{\partial R_0}{\partial \beta}=-\frac{\gamma  (\alpha  c \delta -c \delta )}{\alpha  \beta  c \delta +\alpha  c \gamma  \delta -c \gamma  \delta +c \delta ^2}.
	\end{align*} 	
	\noindent
	One can obtain the followings by using the values of parameters from Table \ref{Table-2}: 	
	\begin{align*}
		\mathcal{E}_\alpha^{{R}_0}\approx-1.05, ~\mathcal{E}_\gamma^{{R}_0}\approx 0.23 ~\mbox{and}~ \mathcal{E}_\beta^{{R}_0}\approx-0.14.
	\end{align*}	
	\noindent
	This shows  that 1\% increase in $\alpha$, $\beta$ and $\gamma$ will produce 1.05\%, 0.14\%   decrease and 0.23\% increase
	in $R_0$. The sensitivity analysis reveals that $\alpha$ has greater positive impacts on $R_0$ in magnitude. So, $\alpha$ can be chosen as a disease controlling parameter.
	\begin{figure}[h!]
		\begin{center}
			\includegraphics[width=12cm,height=8cm]{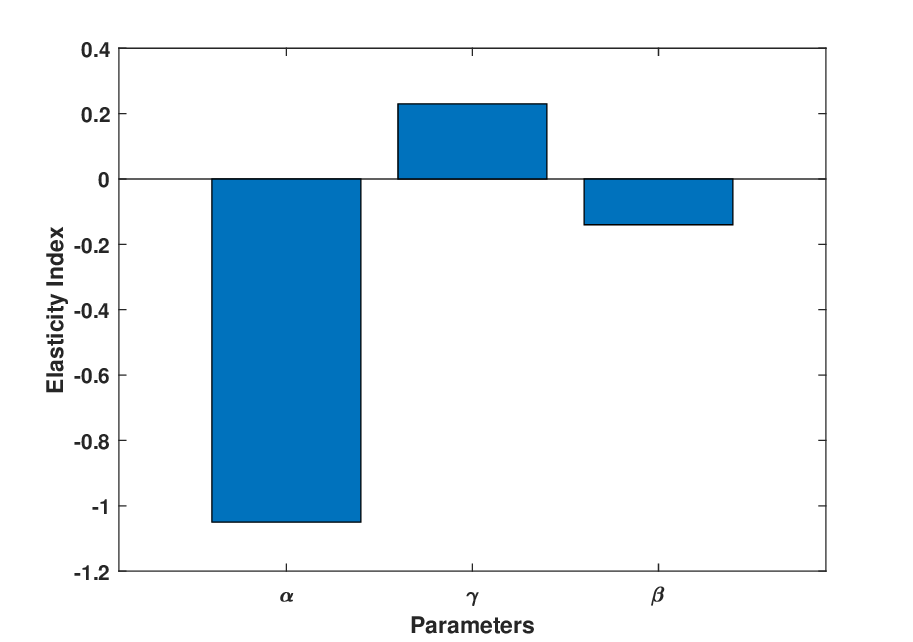};
			\caption{ Elasticities of basic reproduction number with respect to parameters $\alpha$, $\gamma$ and $\beta$.}
			\label{fig_10}
		\end{center}
	\end{figure}
	\section{Global sensitivity analysis of the model} \label{GSA}
	The accuracy of results of a mathematical model related to some biological phenomena often becomes poor  because of uncertainties in experimental data which are utilized in the estimation of model parameters. Recently, many authors study the effects of single parameter keeping all others parameters fixed at their estimated values. This type of sensitivity analysis is called  local sensitivity analysis. But local sensitivity analysis doesn't provide  the proper information of uncertainty and sensitivity of the parameters.  In order to find out the contributions of each model parameters universally in HBV infection dynamics, the global sensitivity analysis is performed using the technique  ``Latin hypercube sampling-partial rank correlation coefficient'' (LHS-PRCC) described by Marino et al. \cite{2008_marino_methodology}. 
	\subsection{Latin hypercube sampling (LHS)-Partial rank correlation coefficient (PRCC)}
	Latin hypercube sampling is one kind of Monte  Carlo class of sampling methods. In 1979, McKay et al. \cite{1979_Mckay_comparison}  first introduced this sampling method. With the help of LHS, sample inputs of the model are arranged within a "hypercube of dimensions $p$", where $p$ represents the number of model parameters. For our proposed model \eqref{eq1}, the number of model parameters $(p)$ is equal to 9. A probability density function (pdf) is employed for sampling parameter values based on parameter ranges partitioned into intervals. In this study, the uniform distribution is chosen for all parameters depending on  a priori information and existing data. The model is then simulated iteratively over all $p$-tuples parameter pairs. It is recommended that the sample size $N$ be at least $(p+1)$, but it is better to take a larger sample size to ensure the desired accuracy of the results . Here, the sample size  is set to 1000.
	
	The correlation coefficient (CC) measures the strength of a linear relationship between the inputs and the outputs. The  CC  is calculated between the input variable ($X$) and output variable ($Y$) as follows:
	\begin{align*}
		r=\dfrac{\sum(X-\bar{X})(Y-\bar{Y})}{\sqrt{\sum(X-\bar{X})^2\sum(Y-\bar{Y})^2}},
	\end{align*}
	where $\bar{X}$ and $\bar{Y}$ represent the  sample means of $X$ and $Y$ respectively and $r\in[-1, 1]$. In case of raw data of  $X$ and  $Y$, the coefficient $r$ is known as sample or Pearson correlation coefficient. The CC ($r$) is called Spearman or  rank correlation coefficient if the data are rank-transformed.  By using LHS-PRCC, one can derive insightful conclusions about how the model parameters influence on the outputs of a system. There are several publications that describe the PRCC method in detail\cite{2008_marino_methodology,2021_afsar}.
	\subsection{Scatter plots: The monotonic relationship between input and output variables}
	Besides the improvement and  generalization of a dynamical system, it has attracted the attention of many researchers to know how the outputs are affected if the  parameters' values  vary in a reasonable range. In the practical field of application especially in virus dynamics model, it is very important and essential to study the sensitivity of parameters. In such cases, PRCC values can provide useful information. PRCC  also can assist us to identify which set of parameters is the most significant for achieving some specific goals such as control or regulatory mechanisms, reduce viral load, increase immune response, proposing any new therapy and optimization of drug usage. PRCC is also capable of identifying both positive and negative correlations on model outputs.  In order to analyze the sensitivity of parameters, the baseline values are taken from  Table \ref{Table-2}. To comprehensively analyze the system globally, we opted to vary all parameters from 80\% to 120\% of their base values. Simulation results of the proposed  model \eqref{eq1} are visualized by scatter plots on Figure \ref{Scatter_uninfected} - Figure \ref{Scatter_infected}. The scatter plot for the capsids class is not displayed in this analysis since no substantial distinction is observed when comparing it to the scatter plot of viruses.   The PRCC values of all  parameters are calculated at day 300 with respect to the dependent variables. The positive correlation of a compartment to a model parameter (PRCC value positive) ensures that if the value of this parameter increases individually or simultaneously, the concentration of the compartment increases accordingly. On the other hand, negative correlation (PRCC value negative) tells us the opposite aspects.

	Based on the PRCC values, the model parameters are arranged in descending order for the uninfected, infected, and virus classes as follows: 
	\begin{itemize}
		\item \textbf{Uninfected hepatocyte}:  $\lambda,~\alpha,~c,~\beta,~\delta,~a,~k,~\gamma,~\mu$.
		\item \textbf{Infected hepatocyte}: $k,~a,~\lambda,~~\gamma,~\beta,~\mu,~\delta,~c,~\alpha$.
		\item  \textbf{Virus}: $a,~k,~\lambda,~\gamma,~\beta,~\mu,~\delta,~\alpha,~c$.
	\end{itemize}

	
	Global sensitivity analysis uncovers numerous new and remarkable findings, which are outlined as follows:
	\begin{enumerate}
		\item The parameters $\lambda,~k,~a$ are identified as the most positively sensitive parameters for uninfected hepatocytes, infected hepatocytes, and the viruses. On the other hand, the parameters $\mu,~\alpha,~c$ are found to be the most negatively sensitive parameters for the same entities. 
		\item In chronic infection, it has been observed that the virus production rate ($\beta$) has a relatively less influence on the overall infection. However, the infection rate ($k$) itself plays a crucial role in driving and sustaining chronic infection.
		\item The recycling rate ($\gamma$) is the second most negatively sensitive parameter for uninfected liver cells. Therefore, the recycling of capsids can indeed act as a positive feedback loop in the context of infection.
		\item The volume fraction of capsids is identified as the most negatively sensitive parameter for the infected compartment. That means if less number of  capsids involved in producing new virions and a larger number of capsids undergo in recycling and as a result this would make the infection more severe.
	\end{enumerate}
	
	\begin{figure}[h!]
		\begin{center}
			\includegraphics[width=16cm,height=9cm]{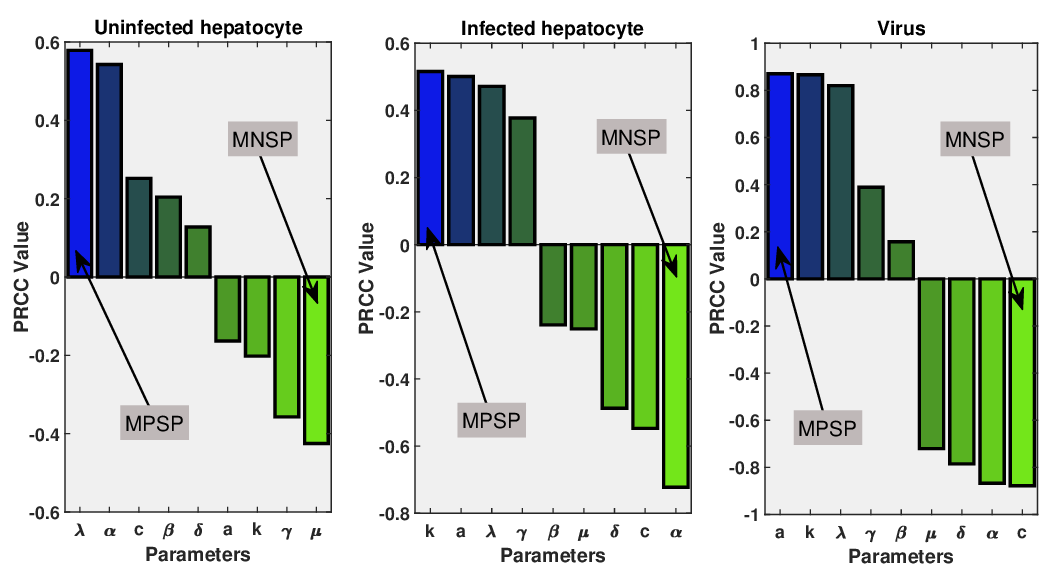};
			\caption{PRCC values of parameters corresponding to uninfected, infected hepatocyte and virus classes are plotted. Here, MNSP: Most Negatively Sensitive Parameter;  MPSP: Most Positively Sensitive Parameter.}
			\label{PRCC_Parameters}
		\end{center}
	\end{figure}
	\begin{figure}[h!]
		\begin{center}
			\includegraphics[width=15cm,height=10cm]{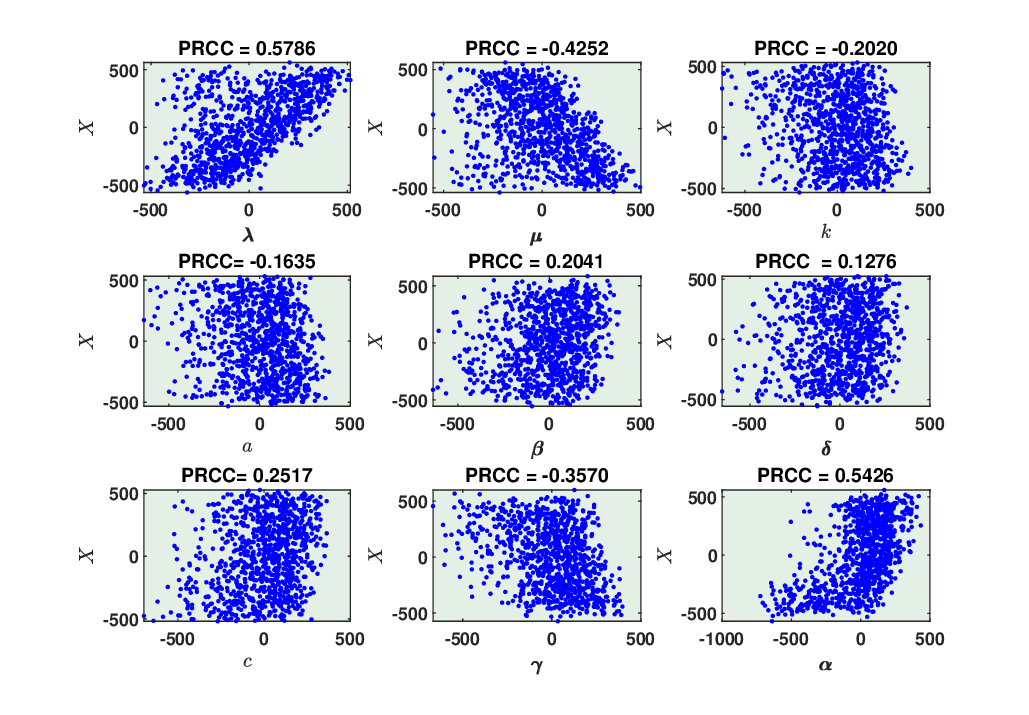};
			\caption{Scatter plot for uninfected hepatocytes ($X$). PRCC value of each parameter are shown on the title on each subplot.}
			\label{Scatter_uninfected}
		\end{center}
	\end{figure}
	\begin{figure}[h!]
		\begin{center}
			\includegraphics[width=15cm,height=10cm]{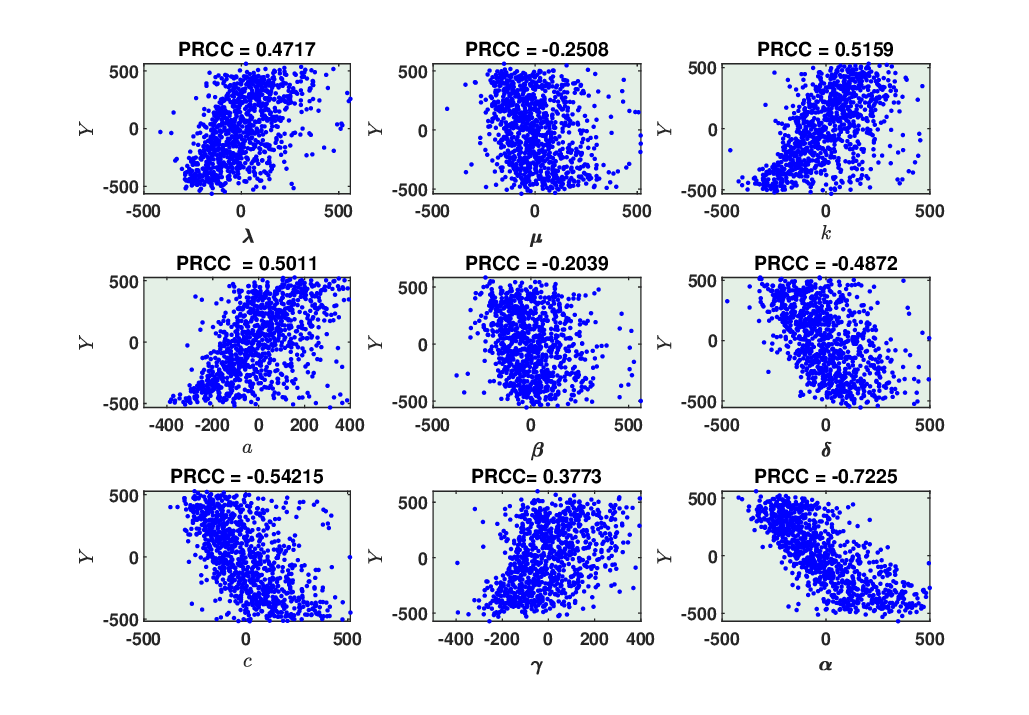};
			\caption{Scatter plot for infected hepatocytes ($Y$). PRCC value of each parameter are shown on the title on each subplot.}
			\label{Scatter_infected}
		\end{center}
	\end{figure}
	\begin{figure}[h!]
		\begin{center}
			\includegraphics[width=15cm,height=10cm]{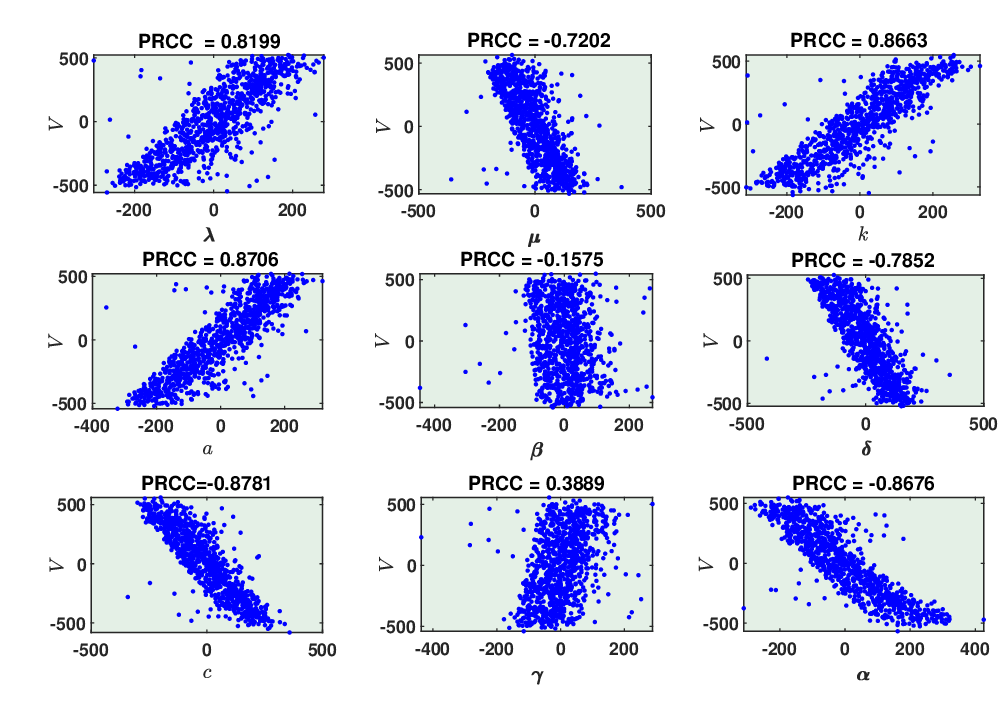};
			\caption{Scatter plot for Virus ($V$). PRCC value of each parameter are shown on the title on each subplot.}
			\label{Scatter_virus}
		\end{center}
	\end{figure}
	\clearpage
	\section{Conclusions} \label{Conclusions}
	In this present study,  hepatitis B virus infection dynamics is modeled  based on the biological findings.  In order to describe this viral infection in a more realistic way, the recycling effects of capsids are incorporated in this model. By including the recycling effects, we have noticed a paradigm shift in the outcomes of the proposed model. The non-negativity and boundedness of the solutions establish the feasibility of the system. The stability analysis of the system indicates that both the equilibrium points of the proposed model are globally asymptotically stable under some conditions \textit{i.e.} the patient will either achieve a full recovery, or the infection will persist for the rest of the life.
	Upon comparing the model solution with experimental data collected from four chimpanzees, it is seen that the model solution agrees well with the
	experimental data. Hence, the proposed model effectively captures and represents the intricate dynamics of HBV infection, making it a more realistic and reliable tool for studying this disease. Comparing with some  other relevant studies in the literature, it is also concluded that this model describes richer dynamics behavior of the infection. In addition, it is further observed that due to recycling the viral load increases considerably.	
	\\
	\noindent
	From the simulated results, the following findings are observed.
	\begin{enumerate}[(i)]

		\item Most of the mathematical  models on HBV infection developed so far, underestimate the production of virions and suppress the production of capsids as the recycling effects of capsids are ignored. Consequently, these models fail to capture the actual dynamics of HBV infection,  whereas, the proposed model shows a more realistic production of virions and the actual dynamics of the infection.
		
		\item 
		Recycling rate of capsids ($\gamma$) is one of the deciding parameters in the model to determine the severity of infection. 
		So, it is very important to pay attention to  this kind of parameter while proposing any control strategy for this disease. 
		
		\item Probably, this study analyses the effects of volume fraction of capsids ($\alpha$) on disease dynamics for the first time.	
		It is found that the inclusion of recycling of capsids reverses the effects of volume fraction on  the infection. This is a striking outcome of the present study that changes the usual understandings about viral dynamics. 
		So, volume fraction of capsids emerges as a viable candidate for a disease-controlling parameter.
		
		
		\item It is also observed that the number of released viruses  increases in spite of low virus production rate due to recycling of capsids. Though this result appears to be contradictory to the known fact, but our study has clearly explained this new findings. In order to gain deeper insights about this infection, the emergence of this unusual behavior becomes very important. On the other word, the recycling of capsids acts as a positive feedback loop in this viral infection.
		\item Based on the values of Partial rank correlation coefficients, the global sensitivity analysis unequivocally identifies that the disease progression is highly influenced by the volume fraction of capsids as well as recycling rate. These findings highlight the pivotal role of these factors in shaping the dynamics of the disease and warrant further attention in future studies.
		\item The strong concurrence between the model solution and the experimental data  substantiates that the proposed model is more realistic and reliable.
	\end{enumerate}
	As mentioned above, our model provides a theoretical backbone of the mechanism causing the exacerbation during the
	chronic HBV infection. This is  a new, and  relatively simple mathematical model that can  describe the infection dynamics more accurately. Using these new findings, this model can be applied to a variety of clinical trials and for the formulation of new drugs.
	\section*{Declarations}
	\subsection*{Ethics approval and consent to participate}
	\noindent
	Not applicable.
	\subsection*{Consent for publication}
	\noindent
	Not applicable.
	\subsection*{Availability of data and materials}
	\noindent
	 Available once the manuscript accepted.
	 \subsection*{Competing interests}
	 \noindent
	 The authors declare that they have no competing interests.
	 \subsection*{Funding}
	 \noindent
	 Council of Scientific \& Industrial Research-SRF Fellowship scheme
	 (File No: 09/731(0171)/2019-EMR-I).
	\subsection*{Acknowledgments}
	\noindent
	The first author also thanks the research facilities received from the Department of Mathematics, Indian Institute of Technology Guwahati, India.
	\subsection*{Authors' information}
	\noindent
	Rupchand Sutradhar, Department of
	Mathematics, Indian Institute of
	Technology Guwahati, Guwahati, Assam
	781039, India.
	Email: rsutradhar@iitg.ac.in\\
	D C Dalal, Department of
	Mathematics, Indian Institute of
	Technology Guwahati, Guwahati, Assam
	781039, India.
	Email: durga@iitg.ac.in

	%
	
%

	%
	%
	

	\section*{Appendix A ~~~~Proof of Non-negativity of the solutions}
\begin{proof}
	It is clear that $\left[\frac{dX}{dt}\right]_{X(t)=0}=\lambda$, $\left[\frac{dY}{dt}\right]_{Y(t)=0}=kVX$, $\left[\frac{dD}{dt}\right]_{D(t)=0}=aY$, $\left[\frac{dV}{dt}\right]_{V(t)=0}=\alpha\beta D$. Since, $\left[\frac{dX}{dt}\right]_{X(t)=0}> 0$, so $X(t)$ is always non-negative $\forall$ $t>0$.
	
	In order to prove the non-negativity of $Y(t)$, it is essential to show first the non-negativity of  $V(t)$. With that aim, it is assumed that
	$V(t)$ does not satisfy the non-negativity condition \textit{i.e.} $\exists$ a time, say $t_V$ such that,	
	\begin{align*}
		t_V=\inf \left\{ t:~t>0,~V(t)=0,\frac{dV}{dt}\leq0\right\}.
	\end{align*}		
	\noindent		
	This implies that  $\left[\frac{dV}{dt}\right]_{V(t_V)=0}=\alpha\beta D(t_V)\leq0$ \textit{i.e.} $\exists$ a time $t_D$ such that		
	\begin{align*}
		t_D=\inf \left\{ t: ~t>0, ~D(t)=0, ~\dfrac{dD}{dt}\leq0 \right\},
	\end{align*}		
	\noindent	
	which gives  $\left[\frac{dD}{dt}\right]_{D(t_D)=0}=a Y(t_D)\leq0$. One may also find a time $t_Y$ such that	
	\begin{align*}
		t_Y=\inf \left\{ t:~t>0,~Y(t)=0,~\frac{dY}{dt}\leq0\right\}.
	\end{align*}	
	\noindent	
	Clearly, $t_V>t_D>t_Y$. It is clear that  $\left[\frac{dY}{dt}\right]_{Y(t_Y)=0}=k V(t_Y)X(t_Y)\leq 0$ $\implies$ $ V(t_Y)\leq0$. Since, $V(t_Y)>0$, we arrive at a contradiction to  the definition of $t_Y$. So,
	$V(t)\geq 0,~ \forall~ t>0$. Consequently, $D(t)\geq 0$ and $Y(t)\geq 0,~\forall ~t>0$.	
\end{proof}
	%
	%
	%
	%
	%
	%
	%
	%
	\clearpage
\end{document}